\newtheorem{theorem}{Theorem}[section]
\newtheorem{proposition}[theorem]{Proposition}
\newtheorem{lemma}[theorem]{Lemma}
\newtheorem{corollary}[theorem]{Corollary}
\newtheorem{definition}[theorem]{Definition}
\newtheorem{introtheorem}{Theorem}
\theoremstyle{remark}
\newtheorem{example}[theorem]{Example}
\newtheorem{remark}[theorem]{Remark}
\newcommand{\secref}[1]{\S\ref{#1}}
\newcommand{\thmref}[1]{Theorem~\ref{#1}}
\newcommand{\propref}[1]{Proposition~\ref{#1}}
\newcommand{\corref}[1]{Corollary~\ref{#1}}
\newcommand{\examref}[1]{Example~\ref{#1}}
\newcommand{\introthmref}[1]{Theorem~\ref{#1}}
\def\cat{\sf{cat}}
\def\cupp{\cup}
\def\dim{\mathrm{dim}}
\def\Z{\mathbb Z}
\def\R{\mathbb R}
\def\U{\mathcal U}
\def\V{\mathcal V}
\def\W{\mathcal W}
\def\RP{\mathbb {RP}}
\def\cD{\mathcal D}
\def\TC{{\mathsf{TC}}}
\def\UTC{\widetilde{\TC}}
\def\secat{{\sf{secat}}}
\newcommand{\chd}{\mathrm{cd}}
\newcommand{\defref}[1]{Definition~\ref{#1}}
\newcommand{\cd}{{\rm {cd}}}
\newcommand{\tsc}{ {\widetilde {\sf secat}(E\stackrel p\to \overline X\stackrel q\to X)}}
\begin{document}

\title[An Upper Bound for Topological Complexity]{An Upper Bound for Topological Complexity}

\author{Michael Farber}
\address{School of Mathematical Sciences \\
Queen Mary University of London\\
London, E1 4NS\\
United Kingdom}
\email{m.farber@qmul.ac.uk}
\author{Mark Grant}
\address{Institute of Pure and Applied Mathematics \\
University of Aberdeen\\
Aberdeen AB24 3UE\\
United Kingdom}
\email{mark.grant@abdn.ac.uk}
\author{Gregory Lupton}
\address{Department of Mathematics\\
Cleveland State University\\
Cleveland OH 44115  \\
U.S.A.}
\email{g.lupton@csuohio.edu}
\author{John Oprea}
\thanks{This work was partially supported by a grant from the Simons Foundation:
(\#244393 to John Oprea).}
\address{Department of Mathematics\\
Cleveland State University\\
Cleveland OH 44115  \\
U.S.A.}
\email{j.oprea@csuohio.edu}

\subjclass[2000]{Primary 55M30; Secondary 55P99}
\keywords{topological complexity, Lusternik-Schnirelmann category}

\begin{abstract}
In \cite{FGLO},  a new approximating invariant $\TC^\cD$ for topological complexity was introduced
called $\cD$-topological complexity. In this paper, we explore more fully the properties of $\TC^\cD$ and
the connections between $\TC^\cD$ and invariants of Lusternik-Schnirelmann type. We also introduce a new
$\TC$-type invariant $\UTC$ that can be used to give an upper bound for $\TC$,
$$\TC(X)\le \TC^\cD(X) + \left\lceil \frac{2\dim X -k}{k+1}\right\rceil,$$
where $X$ is a finite dimensional simplicial complex with $k$-connected universal cover $\tilde X$.
The above inequality is a refinement of an estimate given by Dranishnikov \cite{Dr3}.
\end{abstract}

\maketitle

\section{Introduction}\label{sec:intro}
Topological complexity $\TC(X)$ is a numerical homotopy invariant introduced by Farber \cite{Far}.
As well as being of intrinsic interest to homotopy theorists, its study is motivated by topological aspects of the motion
planning problem in robotics. The number $\TC(X)$ gives a quantitative measure of the
`navigational complexity' of $X$, when viewed as the configuration space of a mechanical system. Topological complexity
is a close relative of the Lusternik--Schnirelmann category $\cat(X)$ (see \cite{CLOT}), although the two notions are independent.

Recall that  $\cat(X)$ is the smallest $n$ such that $X$ admits an open covering $\{ U_0, \ldots, U_n \}$ by $(n+1)$ sets, each of which is contractible in $X$.
The \emph{sectional category} of a fibration $p \colon E \to B$, denoted
by $\secat(p)$, is the smallest number $n$ for which there is an open covering $\{ U_0, \ldots, U_n \}$ of $B$ by $(n+1)$ open
sets, for each of which there is a continuous local section $s_i \colon U_i \to E$ of  $p$; that is,  $p\circ s_i = j_i \colon U_i \to B$,
where $j_i$ denotes the inclusion.

Let $X^I$ denote the space of (free) paths in a space $X$.  There is a fibration
$$\pi_X\colon X^I \to X\times X,$$
which evaluates a path at initial and final points: for $\alpha \in X^I$, we have $\pi_X(\alpha) = \big(\alpha(0), \alpha(1)\big)$.
We define the \emph{topological complexity} $\TC(X)$ of
$X$ to be the sectional category $\secat\big( \pi_X\big)$ of this fibration.  That is, $\TC(X)$ is the smallest number $n$ for
which there is an open cover $\{ U_0, \ldots, U_n \}$ of $X \times X$ by $(n+1)$ open sets, for each of which there is a continuous
section $s_i \colon U_i \to X^I$ of  $\pi_X$,  $\pi_X\circ s_i = j_i \colon U_i \to X \times X$, where $j_i$ denotes the inclusion.

Just as LS category is very difficult to compute, so also is topological complexity. Indeed, it is usually the case for both invariants that
lower and upper bounds are derived. The fundamental such bounds are the following.

\begin{introtheorem}\label{introthm:bounds}
The following bounds hold \cite{Far}:
$$\cat(X) \leq \TC(X) \leq \cat(X \times X).$$
\end{introtheorem}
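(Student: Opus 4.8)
The plan is to prove the two inequalities independently, in each case working directly with motion planners (local sections) and categorical open sets. For the lower bound $\cat(X)\le\TC(X)$, the idea is to restrict a motion planner on $X\times X$ to the slice $X\times\{x_0\}$ over a basepoint $x_0\in X$. Concretely, suppose $\TC(X)=n$ and fix an open cover $U_0,\dots,U_n$ of $X\times X$ with sections $s_i\colon U_i\to X^I$ of $\pi_X$. Put $V_i=U_i\cap(X\times\{x_0\})$, viewed as an open subset of $X$ under the homeomorphism $X\cong X\times\{x_0\}$; these sets cover $X$. Since $\pi_X\circ s_i$ is the inclusion, the formula $H_i(x,t)=s_i(x,x_0)(t)$ defines a homotopy $H_i\colon V_i\times I\to X$ with $H_i(x,0)=x$ and $H_i(x,1)=x_0$, so each $V_i$ is contractible in $X$ and hence $\cat(X)\le n$.

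For the upper bound $\TC(X)\le\cat(X\times X)$, I would invoke the general principle that the sectional category of a fibration never exceeds the Lusternik--Schnirelmann category of its base, once one knows the relevant fibres are non-empty. Precisely: if $p\colon E\to B$ is a fibration and $U\subseteq B$ is open and contractible in $B$ via a homotopy $H\colon U\times I\to B$ ending at a point $b_0$ with $p^{-1}(b_0)\ne\emptyset$, then $p$ has a section over $U$. Indeed, picking $e_0\in p^{-1}(b_0)$, the constant map $U\to E$ at $e_0$ covers the end of $H$; reversing time and applying the homotopy lifting property yields a lift $\widetilde H\colon U\times I\to E$ of $H$ one of whose endpoint maps is a continuous section of $p$ over $U$. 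Applying this to $p=\pi_X$, and observing that any open set which is contractible in $X\times X$ may be taken to contract to a point of the diagonal, over which the fibre of $\pi_X$ is the (non-empty) based loop space $\Omega X$, we conclude that $\TC(X)=\secat(\pi_X)\le\cat(X\times X)$.

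The incidental points — that $\pi_X$ is a genuine fibration (it is the standard free path fibration) and that contracting homotopies in $X\times X$ can be arranged to terminate on the diagonal — present no real difficulty. The one step that demands a little care is the homotopy lifting in the upper bound: one must orient the contracting homotopy so that the known lift sits at the matching endpoint, and one must guarantee that the fibre over that endpoint is non-empty, which is exactly why the contraction is steered onto the diagonal. Beyond that, the argument is bookkeeping with open covers.
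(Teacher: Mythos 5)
Your argument is correct; note that the paper itself offers no proof of this statement --- it is quoted as a known result with a citation to \cite{Far} --- so the only comparison available is with the standard argument in that source. Your lower bound is exactly the usual one: restricting a motion planner to the slice $X\times\{x_0\}$ and evaluating the sections $s_i(x,x_0)$ at time $t$ gives the contracting homotopies, and continuity follows from continuity of the evaluation map $X^I\times I\to X$. For the upper bound you invoke the general fact $\secat(p)\le\cat(B)$ for a fibration with non-empty fibres over the contraction endpoints, proved via the homotopy lifting property; Farber's original argument instead builds the section over a categorical set $U\subseteq X\times X$ explicitly, by writing the null-homotopy of the inclusion as a pair of paths $(\alpha_{(a,b)},\beta_{(a,b)})$ ending at $(a_0,b_0)$, choosing one path $\gamma$ from $a_0$ to $b_0$, and sending $(a,b)$ to the concatenation $\alpha_{(a,b)}\cdot\gamma\cdot\bar\beta_{(a,b)}$. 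The two routes are essentially equivalent in content; yours is slightly more general (it is the Schwarz-type inequality $\secat(p)\le\cat(B)$ applied to $\pi_X$), while the explicit concatenation avoids any appeal to the HLP and makes the motion planner visible. One hypothesis you should make explicit rather than leave implicit: $X$ must be path-connected, both to guarantee that the fibres of $\pi_X$ (spaces of paths from $a$ to $b$) are non-empty and to steer the contraction onto the diagonal --- without it the right-hand inequality fails. With that caveat stated, the proof is complete.
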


When $X=K(\pi,1)$ is aspherical the topological complexity $\TC(X)$ depends only on $\pi$ and we may write $\TC(X)=\TC(\pi)$.
It is easy to see (using the Eilenberg - Ganea theorem \cite{EG}) that  $\TC(\pi)$ is finite if and only if there exists a finite dimensional $K(\pi,1)$.
The following estimate was obtained in \cite{Dr3}.

\begin{introtheorem}\label{introthm:Dranish}
Let $X$ be a finite CW-complex with fundamental group $\pi$. Then
\begin{eqnarray}\label{dr}\TC(X) \leq \TC(\pi) + \dim(X).\end{eqnarray}
\end{introtheorem}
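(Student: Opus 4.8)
The plan is to factor the path fibration $\pi_X\colon X^I\to X\times X$ through an aspherical model of $X\times X$ and to estimate the two resulting pieces separately. Write $n=\dim X$ and $\pi=\pi_1(X)$. If $\TC(\pi)=\infty$ there is nothing to prove, so fix a model $K=K(\pi,1)$ and a classifying map $\phi\colon X\to K$ inducing an isomorphism on $\pi_1$, and set $\Phi=\phi\times\phi\colon X\times X\to K\times K$. Let $q\colon P\to X\times X$ be the pullback of the path fibration $\pi_K\colon K^I\to K\times K$ along $\Phi$. Because a local section of $\pi_K$ over $U\subseteq K\times K$ pulls back to a local section of $q$ over $\Phi^{-1}(U)$, and preimages of an open cover again form an open cover of the same cardinality, one gets $\secat(q)\le\secat(\pi_K)=\TC(\pi)$. (This $\secat(q)$ is essentially the invariant $\TC^\cD(X)$ of \cite{FGLO}; only the displayed inequality is needed for the theorem as stated.)

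Next I would construct the comparison map. Sending $\alpha\mapsto\phi\circ\alpha$ defines a map $X^I\to K^I$ lying over $\Phi$, so by the universal property of the pullback there is a canonical map $\psi\colon X^I\to P$ over $X\times X$ with $\pi_X=q\circ\psi$; after replacing $\psi$ by a fibration we compute its fibre. Under the equivalences $X^I\simeq X$ and $K^I\simeq K$, the map $\psi$ becomes, on homotopy fibres, the loop map $\Omega\phi\colon\Omega X\to\Omega K$, whose homotopy fibre is $\Omega(\mathrm{hofib}\,\phi)=\Omega\widetilde X$, since the classifying map $\phi$ has homotopy fibre the universal cover $\widetilde X$. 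The crucial observation is that $\widetilde X$ is simply connected, so the fibre $\Omega\widetilde X$ of $\psi$ is path-connected (and more generally $(k-1)$-connected if $\widetilde X$ is $k$-connected).

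Now I would invoke Schwarz's genus estimate: a fibration over a paracompact space of covering dimension $\le m$ with $r$-connected fibre has sectional category $\le\lceil(m+1)/(r+2)\rceil-1$ (section its $s$-fold fibrewise join, whose fibre is $(s(r+2)-2)$-connected, for $s=\lceil(m+1)/(r+2)\rceil$). Take an open cover $\{A_i\}_{i=0}^{c}$ of $X\times X$ realising $c:=\secat(q)$, with sections $\rho_i\colon A_i\to P$ of $q$; pulling $\psi$ back along $\rho_i$ gives a fibration over $A_i$ with fibre $\Omega\widetilde X$. Since $\dim A_i\le\dim(X\times X)=2n$ and $\Omega\widetilde X$ is path-connected, Schwarz's estimate bounds its sectional category by $\lceil(2n+1)/2\rceil-1=n$. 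Composing the resulting local lifts of $\rho_i$ with $\rho_i$ itself shows that $\pi_X$ admits a section over each member of a cover of $X\times X$ obtained by subdividing each $A_i$ into $n+1$ open sets.

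The final, and hardest, step is to see that the two contributions \emph{add} rather than \emph{multiply}, i.e.\ that $\secat(\pi_X)=\secat(q\circ\psi)\le\secat(q)+n$, which together with $\secat(q)\le\TC(\pi)$ yields $\TC(X)\le\TC(\pi)+n$. A naive covering argument only gives the product bound $(c+1)(n+1)-1$; obtaining the sum requires a fibrewise-join amalgamation of the $(c+1)(n+1)$ open sets above into $c+n+1$ sets over which $\pi_X^{\,*(c+n+1)}$ admits a global section, in the spirit of the classical (and nontrivial) inequality $\cat(Y\times Z)\le\cat(Y)+\cat(Z)$; in the present paper this is precisely what the auxiliary invariant $\UTC$ is designed to control. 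I expect this amalgamation to be the main obstacle. Finally, I would note that running the same argument with ``path-connected'' replaced throughout by ``$(k-1)$-connected'' replaces $n$ by $\lceil(2n-k)/(k+1)\rceil$, giving the refinement that is the subject of this paper.
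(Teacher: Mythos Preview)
Your factorisation is exactly the one the paper uses: the pullback $P$ of $\pi_K$ along $\Phi$ is fibre-homotopy equivalent to the covering $\tilde X\times_\pi\tilde X\to X\times X$ corresponding to the diagonal subgroup, so your $\secat(q)$ is precisely $\TC^\cD(X)$, and your identification of the fibre of $\psi$ with $\Omega\tilde X$ matches the paper's tower $X^I\stackrel{p}{\to}\tilde X\times_\pi\tilde X\stackrel{q}{\to}X\times X$. The first two stages of your argument are therefore not just correct but identical in spirit to the paper's.

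The gap is the one you flag yourself, and it is genuine. Applying Schwarz's estimate to the pullback of $\psi$ along each $\rho_i$ produces, for each $i$, a cover of $A_i$ by $n+1$ open sets; this yields only the product bound $(c+1)(n+1)-1$. Your proposed ``fibrewise-join amalgamation'' is not a proof, and in fact the naive inequality $\secat(q\circ\psi)\le\secat(q)+\secat(\psi)$ is not known to hold in general (the paper remarks that it has no example with $\tsc>\secat(p)$, but does not claim equality). The obstacle is that the $(n+1)$-set covers you obtain depend on the particular sections $\rho_i$ and are covers of different sets $A_i$; there is no mechanism in your outline for producing a \emph{single} cover of $X\times X$ by $n+1$ sets that works uniformly.

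The paper resolves this by changing the target of the second estimate. Rather than bounding $\secat(\psi)$ (sections over open sets of $P$), it introduces the invariant $\widetilde\TC(X)$, defined via open sets $U\subset X\times X$ such that $\psi$ admits a section over the \emph{full preimage} $q^{-1}(U)$. This ``property B'' is intrinsic to subsets of $X\times X$, is inherited by open subsets and disjoint unions, and can therefore be combined with ``property A'' (existence of a section of $q$) by the elementary cover-combining Lemma~\ref{lem:propAB} to give the additive bound $\secat(q\circ p)\le\secat(q)+\widetilde\TC(X)$ directly (Proposition~\ref{prop1}); no join construction is needed. The bound $\widetilde\TC(X)\le\dim X$ is then obtained in Theorem~\ref{main2} not by Schwarz's join theorem but by a skeletal filtration of $X\times X$: the point is that $q^{-1}$ of a difference of consecutive skeleta is again low-dimensional (since $q$ is a covering), so obstruction theory gives sections of $\psi$ over these $q$-saturated pieces. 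Your Schwarz argument, applied to $\psi$ over the $2n$-dimensional space $P$, would give $\secat(\psi)\le n$, but the resulting cover of $P$ need not consist of $q$-saturated sets, and that is exactly what the additive combination requires.
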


\noindent Of course, this estimate is only meaningful when $\TC(\pi)$ is finite. This rules out, for instance, any group $\pi$
having torsion. The inequality (\ref{dr}) is effective when the group $\pi$ has \lq\lq small\rq\rq\, cohomological dimension, say, is trivial, a free group or a surface group etc.

In this paper, we will refine the estimate of Theorem \ref{introthm:Dranish}
by using an invariant $\TC^\cD(X)$ defined in \cite{FGLO} as well as a new invariant $\UTC(X)$ defined in this article
(see \examref{coincide}).

\begin{introtheorem}\label{introthm:refine}
Let $X$ be a CW-complex with fundamental group $\pi$. Then
\begin{eqnarray}\label{three}\TC(X) \leq \TC^\cD(X) + \UTC(X).
\end{eqnarray}
Moreover, if the universal cover $\tilde X$ is $k$-connected, then
\begin{eqnarray}
\TC(X)  \le \TC^\cD(X) + \left\lceil \frac{2\,\dim X -k}{k+1}\right\rceil.
\end{eqnarray}
\end{introtheorem}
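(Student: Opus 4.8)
The plan is to establish the first inequality \eqref{three} and then deduce the second from it by a dimension-counting argument. For the first inequality, the strategy is to factor the path fibration $\pi_X \colon X^I \to X \times X$ through an intermediate fibration associated to the $\cD$-structure. Recall from \cite{FGLO} that $\TC^\cD(X)$ is defined as the sectional category of (a fibrational substitute of) the composite $X^I \to X \times X \to (X\times X)/\!/\pi$, where the second map records the $\pi \times \pi$-orbit data — concretely, $\TC^\cD(X)$ measures sections up to the ``fibrewise over $B\pi\times B\pi$'' obstruction. I would write the path fibration as a composite $E \xrightarrow{p} \overline X \xrightarrow{q} X\times X$ (in the notation of the macro $\tsc$ appearing in the preamble), where $q$ is the fibration whose sectional category computes $\TC^\cD(X)$ and $p$ is a fibration whose fibre is (a model for) the universal cover, equivalently is $k$-connected when $\tilde X$ is. The new invariant $\UTC(X)$ should then be precisely $\widetilde{\secat}(p)$ or a variant thereof — the ``relative'' sectional category of the inner fibration — so that \eqref{three} becomes an instance of the general subadditivity $\secat(q\circ p) \le \secat(q) + \secat_{?}(p)$ for a composite of fibrations.

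The first key step, then, is to prove this composition inequality for sectional categories in the appropriate relative form: if $\{U_0,\dots,U_m\}$ is an open cover of $X\times X$ with sections of $q$, and over each preimage-region one can cover by $(n+1)$ sets admitting sections of $p$, then one combines them (using the standard ``join/interleaving'' trick for sectional category, cf. the proof of the bounds in Theorem~\ref{introthm:bounds}) to get an open cover of $X\times X$ of cardinality $(m+1)(n+1)$, hence a cover witnessing $\secat \le (m+1)(n+1)-1$; but in fact the sharper additive form $\secat(q\circ p)\le \secat(q)+\secat(p)$ holds by a more careful argument, and this is what is needed. This is the technical heart and I expect it to be the main obstacle: one must be careful that the local sections of $p$ can be chosen over the open sets $U_i$ themselves (pulled back along the local sections $s_i$ of $q$), and that the resulting open sets genuinely form an open cover — this requires the sections $s_i\colon U_i\to \overline X$ to be continuous and the $p$-sections to be defined on open neighborhoods, which is where finite-dimensionality and the CW (or simplicial) hypothesis enter to guarantee enough openness/normality.

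The second step, deducing the numerical bound, is then routine once the right interpretation of $\UTC(X)$ is in place. The fibre of $p$ is $k$-connected, and $\overline X$ (or the relevant base of $p$ after pulling back along sections) has the homotopy type of a complex of dimension at most $2\dim X$ — here the factor $2$ comes from working over $X\times X$. One then applies the standard upper bound for sectional category of a fibration with $k$-connected fibre over a $d$-dimensional base, namely $\secat(p) \le \left\lceil \frac{d-k}{k+1}\right\rceil$ (the sectional-category analogue of the Ganea dimension-connectivity estimate for $\cat$), with $d = 2\dim X$. This yields $\UTC(X) \le \left\lceil \frac{2\dim X - k}{k+1}\right\rceil$, and combining with \eqref{three} gives the claimed inequality. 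The only subtlety is ensuring the base of the relevant fibration really has the homotopy dimension $2\dim X$ rather than something larger; this should follow because the $\cD$-cover construction replaces $X\times X$ up to fibre homotopy by a complex built from two copies of a $K(\pi,1)$-approximation glued to $X\times X$, whose relative cells live in dimensions $\le 2\dim X$.
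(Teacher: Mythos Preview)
Your overall architecture is right --- factor $\pi_X$ as a composite, prove an additive inequality for the composite, then bound the inner term by a dimension--connectivity estimate --- and this matches the paper. But two points are off in ways that matter.

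First, the intermediate map $q$ is not a fibration over a homotopy quotient like $(X\times X)/\!/\pi$; it is the honest \emph{covering map} $q\colon \tilde X\times_\pi\tilde X \to X\times X$ corresponding to the diagonal subgroup $\Delta\subset\pi\times\pi$, and $\TC^\cD(X)=\secat(q)$ for that covering. This matters because the second step exploits that $q$ is a covering: preimages of complexes keep their dimension.

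Second, and more seriously, $\UTC(X)$ is \emph{not} $\secat(p)$, and the paper does not prove (or claim) $\secat(q\circ p)\le\secat(q)+\secat(p)$. The invariant $\UTC(X)=\widetilde{\secat}(X^I\stackrel{p}\to\tilde X\times_\pi\tilde X\stackrel{q}\to X\times X)$ is defined as the least $k$ for which $X\times X$ has an open cover $U_0,\dots,U_k$ such that $p$ admits a section over each $q^{-1}(U_i)$. This is a property of open sets in the \emph{base} $X\times X$, inherited by open subsets and disjoint unions. With that definition, Property~A (``$q$ has a section over $U$'') and Property~B (``$p$ has a section over $q^{-1}(U)$'') are both properties of open sets in $X\times X$, and the additive bound $\secat(q\circ p)\le\secat(q)+\widetilde{\secat}(\cdots)$ follows immediately from the standard cover-mixing lemma (extend both covers to $(k+1)$- and $(m+1)$-covers of common length $k+m+1$ and intersect termwise). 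Your version --- pulling $p$-sections back along a \emph{particular} section $s_i$ of $q$ --- does not give a property of $U$ that is independent of the choice of $s_i$, so it does not feed into the mixing lemma; and the general inequality $\secat(q\circ p)\le\secat(q)+\secat(p)$ you invoke is not known (the paper itself notes $\widetilde{\secat}\ge\secat(p)$ and has no examples of strict inequality, so this is open).

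For the second part, the fibre of $p$ is $\Omega\tilde X$, which is $(k-1)$-connected (not $k$-connected) when $\tilde X$ is $k$-connected; with that correction your dimension--connectivity estimate gives the right ceiling. But you must prove it for $\widetilde{\secat}$, not $\secat(p)$. The paper does this by taking the skeletal filtration $T_i=(X\times X)^{((i+1)k+i)}$ of the \emph{base}, using the fact that each complement $T_i-T_{i-1}$ is homotopy equivalent to a simplicial complex of dimension $\le k$, and then --- crucially because $q$ is a covering --- so is $q^{-1}(T_i-T_{i-1})$; obstruction theory then provides the section of $p$ there. Your worry about ``ensuring the base really has homotopy dimension $2\dim X$'' is misplaced: the argument lives entirely on $X\times X$ and its skeleta, not on a fibrewise replacement.
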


The definitions of the invariants $\TC^\cD(X)$ and $\UTC(X)$ are  given in \S 2 and \S 3. Note, however, that in contrast to $\TC(\pi)$,
$\TC^\cD(X)$ is \emph{finite} whenever $\TC(X)$ is. Furthermore, it equals $\TC(\pi)$ for $X=K(\pi,1)$. In general one has an inequality
\begin{eqnarray}\label{one}\TC^\cD(X)\le \TC(\pi), \quad \mbox{where}\quad \pi=\pi_1(X).\end{eqnarray}
For the invariant $\UTC(X)$, we will see that it is a special case of a new type of sectional category invariant $\tsc$ associated to a
covering map $q\colon \overline X \to X$ and a fibration $p\colon E \to \overline X$. The invariant $\tsc$ and its properties will be
explored in \secref{sec:fibcov}. As a special case of \thmref{main2}, we will also see that
\begin{eqnarray}\label{two}\UTC(X) \leq \dim (X).\end{eqnarray}
Hence both terms on the right-hand side of inequality (\ref{three}) are dominated by the corresponding terms of the right-hand side of (\ref{dr}).
We give specific examples when (\ref{three}) is sharper than (\ref{dr}). We note, however, that Dranishnikov actually proves a stronger inequality than (\ref{dr})
using his notion of \emph{strongly equivariant topological complexity} $\TC^*_\pi$ and uses the more ``practical'' inequality (\ref{dr}) because $\TC^*_\pi$ is very
difficult to compute. We show in \propref{prop:setc} that our invariant $\UTC(X)$ is equal to $\TC^*_\pi(\tilde X)$ so that in the general case (\ref{three})
is a refinement because of the first term $\TC^\cD(X)$ alone. Nevertheless, using $\UTC$ provides not only a much simpler proof of the general upper
bound  in \introthmref{introthm:refine} but also allows the generalization to the improved connectivity-dimension upper bound in \introthmref{introthm:refine} as well.



\section{The $\cD$-Topological Complexity}\label{sec:tc}
Let us recall from \cite{FGLO} the following definition.

\begin{definition}\label{def:tcd} Let X be a path-connected topological space with fundamental group
$\pi = \pi_1(X; x_0)$. The $\cD$-topological complexity, $\TC^\cD(X)$, is defined as the minimal number
k such that $X \times X$ can be covered by $k +1$ open subsets $$X \times X = U_0 \cupp U_1 \cupp \cdots \cupp U_k$$ with
the property that for any $i = 0, 1, 2, \ldots, k$ and for every choice of the basepoint $u_i \in U_i$,
the homomorphism $\pi_1(U_i; u_i) \to \pi_1(X \times X; u_i)$ induced by the inclusion $U_i \to X \times X$ takes
values in a subgroup conjugate to the diagonal $\Delta \subset \pi\times\pi$.
\end{definition}
\noindent Note that the letter $\cD$ in the notation $\TC^\cD(X)$ stands for the \lq\lq diagonal\rq\rq.

Here we mention that for each point $u_i\in X\times X$, there is an isomorphism
$\pi_1(X \times X; u_i) \to \pi_1(X \times X; (x_0; x_0)) = \pi\times\pi$
determined uniquely up to conjugation, and the diagonal inclusion $X \to X \times X$ induces
the inclusion $\pi \to \pi\times\pi$ onto the diagonal $\Delta$.

Recall that a topological space $X$ admits a universal cover if it is connected, locally path connected and semi-locally simply connected. Since these conditions are preserved under products, it then follows that $X\times X$ admits a universal cover. In particular, $X\times X$ admits a universal cover whenever $X$ is a locally finite cell complex.


\begin{proposition}\label{cor:covspace} Let $X$ be a connected, locally path connected and semi-locally simply connected topological space with fundamental group $\pi =
\pi_1(X; x_0)$. Let $q \colon \widehat{X \times X} \to X \times X$ be the connected covering space corresponding to the
diagonal subgroup $\Delta \subset \pi\times\pi = \pi_1(X \times X; (x_0; x_0))$.
Then the $\cD$-topological complexity satisfies
$$\TC^\cD(X)=\secat(q);$$
that is, $\TC^\cD(X)$ equals the sectional category of the covering $q$.
\end{proposition}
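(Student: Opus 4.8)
The plan is to prove both inequalities $\TC^\cD(X) \le \secat(q)$ and $\secat(q) \le \TC^\cD(X)$ by translating the defining property of $\TC^\cD$ — that an open set $U_i \subset X\times X$ maps into a conjugate of $\Delta$ on $\pi_1$ — into the existence of a lift of the inclusion $U_i \hookrightarrow X\times X$ through the covering $q$. The key classical fact I would invoke is the lifting criterion for covering spaces: given a connected, locally path connected space $U$ with a map $f\colon U \to X\times X$ and a choice of basepoints, $f$ lifts to a map $\tilde f\colon U \to \widehat{X\times X}$ if and only if $f_*\big(\pi_1(U;u)\big)$ is contained in the image of $q_*$, which by construction is the diagonal subgroup $\Delta$ (or, dropping basepoint choices, a conjugate of $\Delta$). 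A local section of $q$ over $U_i$ is precisely such a lift of the inclusion $j_i$, so an open cover realizing $\TC^\cD(X) = k$ gives an open cover realizing $\secat(q) \le k$ and conversely.

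First I would fix notation: write $p_0 = (x_0,x_0)$, identify $\pi_1(X\times X; p_0) = \pi\times\pi$, and recall that the covering $q$ corresponding to $\Delta$ has the property that $q_*\pi_1(\widehat{X\times X}; \hat p) = \Delta$ for a chosen basepoint $\hat p$ over $p_0$. For the direction $\secat(q) \le \TC^\cD(X)$: suppose $\{U_0,\dots,U_k\}$ is an open cover as in \defref{def:tcd}. Fix $i$ and a basepoint $u_i \in U_i$; since $U_i$ need not be connected, I would argue component by component — on each path component $V$ of $U_i$, the hypothesis says $(j_i)_*\pi_1(V; v)$ lies in a subgroup conjugate to $\Delta$, which is exactly the subgroup $q_*\pi_1(\widehat{X\times X}; \hat q)$ for a suitable choice of basepoint $\hat q$ over $v$. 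Here I must note that each $U_i$ is open in $X\times X$, hence locally path connected, so its components are open and the lifting criterion applies on each. The lifting criterion then produces a section $s_i^V\colon V \to \widehat{X\times X}$ of $q$; assembling over components gives $s_i\colon U_i \to \widehat{X\times X}$ with $q\circ s_i = j_i$. Thus $\secat(q) \le k = \TC^\cD(X)$.

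For the reverse direction $\TC^\cD(X) \le \secat(q)$: suppose $\{U_0,\dots,U_k\}$ is an open cover of $X\times X$ with sections $s_i\colon U_i \to \widehat{X\times X}$ of $q$. Then for any basepoint $u_i \in U_i$ and the resulting basepoint $s_i(u_i) \in \widehat{X\times X}$, functoriality gives $(j_i)_* = q_* \circ (s_i)_*$ on $\pi_1$, so $(j_i)_*\pi_1(U_i; u_i) \subseteq q_*\pi_1(\widehat{X\times X}; s_i(u_i))$. Since $\widehat{X\times X}$ is connected, the subgroup $q_*\pi_1(\widehat{X\times X}; s_i(u_i))$ is conjugate in $\pi\times\pi$ to $q_*\pi_1(\widehat{X\times X}; \hat p) = \Delta$; the conjugating element is determined by a path in $\widehat{X\times X}$ from $\hat p$ to $s_i(u_i)$ (its image under $q$ gives the change-of-basepoint in $X\times X$). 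Hence the image lands in a conjugate of $\Delta$, verifying the defining condition of $\TC^\cD$, so $\TC^\cD(X) \le k = \secat(q)$.

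The main obstacle, such as it is, is bookkeeping with basepoints and conjugacy rather than anything deep: one must be careful that the "up to conjugation" in \defref{def:tcd} — which allows a different conjugate of $\Delta$ for each component of each $U_i$ and each basepoint choice — matches exactly the ambiguity in $q_*\pi_1(\widehat{X\times X}; \cdot)$ as the basepoint ranges over the fiber of $q$ and over the connected space $\widehat{X\times X}$. I would also want to state explicitly that a local section of $q$ over an open set is the same thing as a lift of the inclusion, since $q$ being a covering map means local sections and lifts of $j_i$ coincide. With those points made, both inequalities follow immediately from the covering space lifting theorem and the proposition is proved.
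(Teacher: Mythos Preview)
Your proof is correct and follows essentially the same approach as the paper's own proof, which invokes the covering-space lifting criterion in both directions to identify the $\pi_1$-condition in \defref{def:tcd} with the existence of a local section of $q$. Your version is considerably more detailed---handling components, basepoints, and the conjugacy bookkeeping explicitly---whereas the paper compresses all of this into a two-sentence sketch.
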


\begin{proof}
If $U \subset X \times X$ is an open subset, then a partial section $U \to \widehat{X\times X}$ of $q$ gives a
factorisation of the homomorphism  of fundamental groups induced by the inclusion $U\to X\times X$ through the diagonal. Now,
since $q$ is a covering, for an open subset $U \subset X \times X$, the condition that the induced map $\pi_1(U; u) \to
\pi_1(X \times X; u)$ takes values in a subgroup conjugate to the diagonal $\Delta$ implies that
$q$ admits a continuous section over $U$. Using this remark the result follows
by comparing the definitions of $\TC^\cD(X)$ and of sectional category.
\end{proof}

\begin{example}
For a path-connected space $X$ one has $\TC^\cD(X)=0$ if and only if $X$ is simply connected; this follows directly from the definition. In particular we have
$\TC^\cD(S^n) =0$ for all $n>1$. Also, we have that $\TC^\cD(S^1) =1$ as follows from $\TC^\cD(S^1) >0$ (since the circle is not simply connected) and
$\TC^\cD(S^1) \le \TC(S^1) =1$ (see Proposition \ref{prop:nonasph} below).
\end{example}

Next we compare $\TC^\cD(X)$ with $\TC(X)$.

\begin{proposition}\label{prop:nonasph} For a connected, locally path connected and semi-locally simply connected topological space $X$ one has
$$\TC^\cD(X) \leq \TC(X).$$

\end{proposition}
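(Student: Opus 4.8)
The plan is to exhibit, for any open cover of $X \times X$ that witnesses $\TC(X) \le n$, that the same cover witnesses $\TC^\cD(X) \le n$. Concretely, suppose $X \times X = U_0 \cup \cdots \cup U_n$ with each $U_i$ admitting a continuous section $s_i \colon U_i \to X^I$ of the path fibration $\pi_X \colon X^I \to X \times X$. I would show that each such $U_i$ automatically satisfies the diagonal condition of \defref{def:tcd}, namely that $\pi_1(U_i; u_i) \to \pi_1(X \times X; u_i)$ has image in a conjugate of $\Delta$; the conclusion then follows by comparing definitions.

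The key step is the homotopy-theoretic observation that a section $s_i$ of $\pi_X$ over $U_i$ is precisely a homotopy (within $X \times X$) from the inclusion $j_i \colon U_i \hookrightarrow X \times X$ to a map with image in the diagonal. Indeed, given $s_i$, define $H \colon U_i \times I \to X \times X$ by $H(u, t) = \big(s_i(u)(0),\, s_i(u)(t)\big)$. Then $H(\cdot, 0) = \big(s_i(u)(0), s_i(u)(0)\big)$ maps $U_i$ into the diagonal $\Delta_X \subset X \times X$, while $H(\cdot, 1) = \big(s_i(u)(0), s_i(u)(1)\big) = \pi_X(s_i(u)) = j_i(u)$, so $H$ is a homotopy from $j_i$ to a map factoring through $\Delta_X$. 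On fundamental groups, a homotopic map has the same induced homomorphism up to conjugation by the class of the track of the basepoint; hence the image of $\pi_1(U_i; u_i)$ under $(j_i)_\ast$ is conjugate into the image of $\pi_1(\Delta_X)$, which is exactly the diagonal subgroup $\Delta \subset \pi \times \pi$. Since $\Delta_X \to X \times X$ is the product inclusion whose effect on $\pi_1$ is the diagonal embedding $\pi \to \pi \times \pi$ (as noted just before \propref{cor:covspace}), this is precisely the condition required in \defref{def:tcd}.

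I would organize the write-up so that this homotopy argument is the single substantive point, then invoke \propref{cor:covspace} (or simply \defref{def:tcd} directly) to assemble the cover into the bound $\TC^\cD(X) \le \TC(X)$. The only mild obstacle is bookkeeping with basepoints: the diagonal condition in \defref{def:tcd} must hold for \emph{every} choice of basepoint $u_i \in U_i$ (equivalently on every path component of $U_i$), so one should note that the homotopy $H$ above is defined uniformly on all of $U_i$ and therefore handles all components and basepoints simultaneously, with the conjugating element depending on the chosen basepoint but always yielding a subgroup conjugate to $\Delta$. No connectivity or finiteness hypotheses on $X$ are needed, consistent with the generality of the statement.
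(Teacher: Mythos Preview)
Your argument is correct and takes a genuinely different route from the paper's. You verify the $\pi_1$-condition of \defref{def:tcd} directly, by observing that a local section of $\pi_X$ over $U_i$ yields a homotopy (in $X\times X$) from the inclusion $U_i\hookrightarrow X\times X$ to a map landing in the diagonal $\Delta_X$; the effect on $\pi_1$ then forces the image into a conjugate of $\Delta$. The paper instead invokes \propref{cor:covspace} and exhibits an explicit factorisation
\[
X^I \xrightarrow{\,p\,} \tilde X\times_\pi\tilde X \xrightarrow{\,Q\,} X\times X
\]
of the path fibration through the covering $Q$ corresponding to $\Delta$, so that any partial section of $\pi_X$ composes to a partial section of $Q$, giving $\secat(\pi_X)\ge\secat(Q)=\TC^\cD(X)$.

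Your approach is more elementary and, as you note, does not actually require the existence of a universal cover, so it proves the inequality for any path-connected $X$. The paper's approach, while requiring the covering-space hypotheses, has the payoff that the factorisation $X^I\to \tilde X\times_\pi\tilde X\to X\times X$ is precisely the tower used later to define $\UTC(X)$ (see \examref{coincide}) and to prove the main upper bound; so their proof doubles as groundwork for the rest of the paper.
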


\begin{proof} The following argument appears in the proof of Theorem 4.1 of \cite{FTY}.
Let $\tilde X\to X$ be the universal cover of $X$. Consider the projection $Q: \tilde X\times_\pi \tilde X\to X\times X$ where $\pi=\pi_1(X)$ denotes the fundamental group of $X$ and
$\tilde X\times_\pi \tilde X$ stands for the quotient of $\tilde X\times \tilde X$ with respect to the diagonal action of $\pi$.
Clearly $Q$ is a covering map with the property that the image of the induced homomorphism $Q_\ast: \pi_1(\tilde X\times_\pi \tilde X) \to \pi_1(X\times X)$ is the diagonal.
Hence by Proposition \ref{cor:covspace} one has $\TC^\cD(X)=\secat (Q)$.

%

Define $p\colon X^I \to \tilde X\times_\pi \tilde X$ by $p(\gamma)=
[\tilde\gamma(0),\tilde\gamma(1)]$, where $\tilde\gamma\colon I \to \widetilde X$ is any
lift of the path $\gamma\colon I \to X$ and the brackets $[x, y]$ denote the orbit of the pair $(x, y)\in \tilde X\times\tilde X$ with respect to the diagonal action of $\pi$.
The map $p$ is well-defined although of course the lift $\tilde \gamma$ is not unique.
We obtain the
following commutative diagram.
$$\xymatrix{
X^I \ar[rr]^-p \ar[dr]_-{\pi_X} & & \tilde X\times_\pi \tilde X \ar[dl]^-{Q} \\
& X \times X &
}
$$
Clearly, a partial section $s\colon U \to X^I$ gives a partial section $\tilde s = p \,s\colon U \to
\tilde X\times_\pi \tilde X$, so we have
$$\TC(X) =\secat(\pi_X)\geq \secat(Q)=\TC^\cD(X).$$
\end{proof}

\begin{proposition}\label{aspher}
Let $X$ be an aspherical locally finite cell complex. Then $$\TC^\cD(X)=\TC(X).$$
\end{proposition}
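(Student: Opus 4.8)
The plan is to prove the inequality $\TC^\cD(X) \le \TC(X)$ (which holds in general by \propref{prop:nonasph}) together with the reverse inequality $\TC(X) \le \TC^\cD(X)$ when $X$ is aspherical. The first inequality is already available, so all the work goes into showing $\TC(X) \le \TC^\cD(X)$ for an aspherical locally finite cell complex. The key point is that when $X = K(\pi,1)$, the covering space $\widehat{X\times X} \to X\times X$ corresponding to the diagonal $\Delta \subset \pi\times\pi$ from \propref{cor:covspace} is itself aspherical, and in fact it is a $K(\pi,1)$ as well (since $\Delta \cong \pi$). The idea is to compare the covering map $q$ with the path fibration $\pi_X$ and show that any partial section of $q$ can be promoted to a partial section of $\pi_X$, up to homotopy, by exploiting asphericity.

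First I would recall the commutative diagram from the proof of \propref{prop:nonasph}, with $\tilde X \times_\pi \tilde X$ playing the role of $\widehat{X\times X}$ and $Q$ the covering map, sitting between $X^I$ (via $p$) and $X\times X$ (via $\pi_X$). The crucial observation is that since $X$ is aspherical, $\tilde X$ is contractible, so $\tilde X \times \tilde X$ is contractible and hence $\tilde X \times_\pi \tilde X$ is a $K(\pi,1)$; moreover the map $p\colon X^I \to \tilde X\times_\pi \tilde X$ is a weak homotopy equivalence (both spaces are $K(\pi,1)$'s and $p$ induces an isomorphism on $\pi_1$, since $X^I \simeq X$ via the fibration $\pi_X$ restricted suitably, and one checks $p_*$ is the diagonal-to-$\Delta$ identification). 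Actually the cleanest route: $\pi_X\colon X^I \to X\times X$ is a fibration with fibre $\Omega X$, and since $X$ is aspherical, $\Omega X$ is homotopy equivalent to the discrete set $\pi$; thus $\pi_X$ is, up to fibre homotopy equivalence, a covering map with fibre $\pi$ — indeed it must be the covering corresponding to the image of $(\pi_X)_*$, which is precisely the diagonal $\Delta$. Hence $\pi_X$ and $q$ are equivalent coverings, and therefore have the same sectional category.

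More carefully, the main steps are: (1) invoke asphericity of $X$ to conclude that the fibre inclusion $\Omega X \hookrightarrow X^I$ induces a map $X^I \to X\times X$ whose homotopy fibre is $K(\pi,0) = \pi$; (2) identify the image of $(\pi_X)_* \colon \pi_1(X^I) \to \pi_1(X\times X) = \pi\times\pi$ with the diagonal $\Delta$ (this uses that the evaluation fibration's long exact sequence in homotopy, combined with $\pi_1(X^I)\cong \pi_1(X)$ via the path-component structure when $\Omega X$ is homotopy-discrete); (3) conclude that, since a fibration with homotopy-discrete fibre over a space admitting a universal cover is fibre-homotopy equivalent to the covering map determined by the image subgroup, $\pi_X$ is fibre-homotopy equivalent to $q\colon \widehat{X\times X}\to X\times X$; (4) use that fibre-homotopy equivalent fibrations have equal sectional category, giving $\TC(X) = \secat(\pi_X) = \secat(q) = \TC^\cD(X)$ by \propref{cor:covspace}. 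The local finiteness hypothesis is exactly what guarantees $X\times X$ has a universal cover so that \propref{cor:covspace} applies and the covering $q$ is genuinely defined.

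The main obstacle I anticipate is step (3): making precise the claim that a Hurewicz fibration with homotopy-discrete fibre is fibre-homotopy equivalent to an honest covering space. One must produce an explicit fibre-homotopy equivalence $X^I \to \widehat{X\times X}$ over $X\times X$ rather than merely a weak equivalence on total spaces; since $\widehat{X\times X} \to X\times X$ is a covering and $X^I \to X\times X$ is a fibration, obstruction theory (or the fact that the fibres are aspherical of type $K(\pi,0)$) shows that the lift $p$ of $\pi_X$ through $Q$ — which already exists by the proof of \propref{prop:nonasph} — is in fact a fibre homotopy equivalence, because it restricts to a homotopy equivalence on each fibre ($\Omega X \simeq \pi$ mapping bijectively to the fibre of $Q$). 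Once one knows $p$ is a fibre homotopy equivalence, standard results (e.g.\ Dold's theorem on sectional category, or directly James's results in \cite{CLOT}) give $\secat(\pi_X) = \secat(Q)$, and \propref{cor:covspace} finishes the proof.
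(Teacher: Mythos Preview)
Your argument is correct but follows a genuinely different route from the paper's. The paper works set by set: it uses the ANR/Milnor machinery to show that each open $U\subset X\times X$ in a $\TC^\cD$-cover has the homotopy type of a CW complex, then uses asphericity of $X\times X$ to homotope the inclusion $U\hookrightarrow X\times X$ into the diagonal $\Delta_X$, and finally invokes Lemma~4 of \cite{FG} to produce a section of $\pi_X$ over $U$. You instead give a global argument: since $\Omega X\simeq\pi$ is homotopy-discrete, the path fibration $\pi_X$ and the covering $Q$ have weakly equivalent fibres; the already-constructed map $p\colon X^I\to\tilde X\times_\pi\tilde X$ is a weak equivalence between $K(\pi,1)$'s of CW type, hence a homotopy equivalence, and Dold's theorem over a CW base promotes this to a fibre homotopy equivalence, yielding $\secat(\pi_X)=\secat(Q)$ directly.

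Your approach is more conceptual and avoids both the ANR input and the external lemma from \cite{FG}; it also makes clear that the locally finite hypothesis is not really needed (any connected CW complex $X$ will do, since $X^I$ has CW type by Milnor and $\tilde X\times_\pi\tilde X$ is a CW complex as a cover of one, so Whitehead and Dold both apply). Your stated justification for local finiteness---that it is needed for the universal cover of $X\times X$ to exist---is not quite right: every connected CW complex is locally contractible and hence admits a universal cover. The paper's argument, by contrast, genuinely uses local finiteness (via the ANR property) to guarantee that \emph{open subsets} of $X\times X$ have CW type, which is what makes the obstruction-theoretic compression into $\Delta_X$ go through.
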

\begin{proof}
Recall the known fact that an open subset of a locally finite cell complex is homotopy
equivalent to a countable cell complex. Indeed, by Theorem 1 of \cite{Mil}, a space is homotopy equivalent to a countable
cell complex if and only if it is homotopy equivalent to an absolute neighbourhood retract (ANR). Any locally finite cell complex is
an ANR and an open subset of an ANR is an ANR \cite{Han}. Thus, an open subset of a locally finite cell complex is an ANR and hence has
the homotopy type of a countable cell complex.

In view of Proposition \ref{prop:nonasph} we only need to establish the inequality $\TC(X)\le \TC^\cD(X)$.
Consider an open subset $U\subset X\times X$ such that the map induced by the inclusion $U\to X\times X$ on fundamental groups takes
values in a subgroup conjugate to the diagonal. Since $X\times X$ is aspherical and $U$ has the homotopy type of a cell complex, we see
that the inclusion $U\to X\times X$ is homotopic to a map with values in the diagonal $\Delta_X\subset X\times X$. Now we can use Lemma 4 from
\cite{FG} to conclude that a section of the path fibration $\pi_X: X^I \to X\times X$ over $U$ exists. The statement follows from the definitions.
\end{proof}

\begin{proposition}\label{prop4}
Let $f: X\to Y$ be a continuous map between path-connected topological spaces such that the induced map $f_\ast: \pi_1(X) \to \pi_1(Y)$ is an isomorphism. Then
$$\TC^\cD(X) \le \TC^\cD(Y).$$
\end{proposition}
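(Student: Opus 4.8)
The plan is to use the covering-space characterization of $\TC^\cD$ provided by \propref{cor:covspace} and functoriality of covering spaces under a map inducing an isomorphism on fundamental groups. Write $\pi = \pi_1(X) \cong \pi_1(Y)$ via $f_\ast$, so that $f \times f \colon X \times X \to Y \times Y$ induces an isomorphism on fundamental groups as well, carrying the diagonal $\Delta \subset \pi \times \pi$ to the diagonal. Let $q_X \colon \widehat{X \times X} \to X \times X$ and $q_Y \colon \widehat{Y \times Y} \to Y \times Y$ be the connected covering spaces corresponding to the respective diagonal subgroups. The first step is to observe that, because $(f\times f)_\ast$ carries $\Delta$ isomorphically onto $\Delta$, the map $f \times f$ lifts to a map $\widehat{f \times f} \colon \widehat{X \times X} \to \widehat{Y \times Y}$ making the evident square commute; this is the standard lifting criterion for covering spaces applied to the composite $\widehat{X\times X} \to X \times X \to Y \times Y$.

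The second step is the pullback comparison: from the commuting square one gets a map $\widehat{X \times X} \to (f \times f)^\ast \widehat{Y \times Y}$ over $X \times X$, and since $(f\times f)_\ast$ is an isomorphism, the pulled-back cover $(f\times f)^\ast\widehat{Y\times Y} \to X \times X$ is exactly the connected cover of $X \times X$ corresponding to $\Delta$, i.e. it is $q_X$ (up to isomorphism of covers). Concretely, given an open set $V \subset Y \times Y$ over which $q_Y$ has a section, the preimage $U = (f\times f)^{-1}(V)$ is open in $X \times X$, and composing the section with the canonical map $U \to V \times_{Y\times Y}\widehat{Y\times Y} \cong (\text{restriction of } q_X)$ produces a section of $q_X$ over $U$. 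Thus an open cover $V_0, \dots, V_n$ of $Y \times Y$ witnessing $\secat(q_Y) \le n$ pulls back to an open cover $U_0, \dots, U_n$ of $X \times X$ witnessing $\secat(q_X) \le n$, and \propref{cor:covspace} gives $\TC^\cD(X) = \secat(q_X) \le \secat(q_Y) = \TC^\cD(Y)$.

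The main point requiring care — though it is not really an obstacle — is the identification of the pulled-back cover $(f\times f)^\ast \widehat{Y \times Y}$ with $q_X$. This rests on two facts: that the pullback of a connected covering need not be connected, but its components are classified by the double cosets of the image subgroup, and here because $(f\times f)_\ast$ is surjective onto $\pi_1(Y\times Y)$ the relevant double coset space is a point, so the component of $(f\times f)^\ast\widehat{Y\times Y}$ through the basepoint is the connected cover classified by $(f\times f)_\ast^{-1}(\Delta) = \Delta$; and that sections of $q_X$ over $U$ are in natural bijection with sections of the pulled-back cover over $U$. Once these are in place the inequality is immediate. An alternative, perhaps cleaner, route avoids pullbacks entirely: given a section $\sigma$ of $q_Y$ over $V$, one checks directly that over $U = (f\times f)^{-1}(V)$ the inclusion $U \hookrightarrow X \times X$ induces on $\pi_1$ (for any basepoint) a map landing in a conjugate of $\Delta$ — this follows because $\pi_1(U) \to \pi_1(X\times X) \xrightarrow{\cong} \pi_1(Y \times Y)$ factors through $\pi_1(V) \to \pi_1(Y\times Y)$, which lands in a conjugate of $\Delta$ by existence of $\sigma$ — and then the defining condition in \defref{def:tcd} is met directly, so $\TC^\cD(X) \le \TC^\cD(Y)$ without invoking \propref{cor:covspace} at all.
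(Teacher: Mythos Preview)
Your proposal is correct, and your ``alternative, perhaps cleaner, route'' at the end is exactly the argument the paper gives: take an open set $U\subset Y\times Y$ whose fundamental group maps into a conjugate of the diagonal, pull it back to $V=(f\times f)^{-1}(U)\subset X\times X$, and use that $\pi_1(V)\to\pi_1(X\times X)\xrightarrow{\cong}\pi_1(Y\times Y)$ factors through $\pi_1(U)\to\pi_1(Y\times Y)$ to conclude that the defining condition of \defref{def:tcd} holds for $V$. No covering spaces or sections are invoked.

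Your primary route through \propref{cor:covspace} and pullback of covers is also valid, but note one technical point: \propref{cor:covspace} is stated under the hypotheses that $X$ is connected, locally path connected, and semi-locally simply connected (so that the relevant covering spaces exist), whereas the present proposition is stated for arbitrary path-connected spaces. So the covering-space argument, as written, does not quite cover the full generality of the statement. The direct $\pi_1$ argument avoids this issue entirely, which is presumably why the paper chose it; it also makes transparent that only the hypothesis that $f_\ast$ carries the diagonal to the diagonal (up to conjugacy) is used---injectivity of $f_\ast$ plays no role, and surjectivity is needed only to ensure $(f\times f)_\ast^{-1}(\Delta_Y)=\Delta_X$ rather than something larger.
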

\begin{proof} Let $U\subset Y\times Y$ be an open subset such that the induced homomorphism $\pi_1(U, u_i)\to \pi_1(Y\times Y, u_i)$
takes values in a subgroup conjugate to the diagonal $\Delta_Y$. Consider the preimage $V=(f\times f)^{-1}\subset X\times X$. The homomorphism
$\pi_1(V, v_i) \to \pi_1(X\times X, v_i)$ induced by the inclusion $V\to X\times X$ takes values in a subgroup conjugate to the diagonal
$\Delta_X$. Hence any open cover of $Y\times Y$ as in Definition \ref{def:tcd} defines a similar covering on $X\times X$ with the same number of sets.
\end{proof}

\begin{corollary}
$\TC^\cD(X)$ is a homotopy invariant of $X$.
\end{corollary}

We may therefore write $\TC^\cD(\pi)=\TC^\cD(K(\pi,1))$. Note that \propref{aspher} says that $\TC^\cD(\pi)=\TC(\pi)$.

\begin{corollary} Let $X$ be a path-connected cell complex with fundamental group $\pi=\pi_1(X)$.
Then
\begin{eqnarray}\label{leq}\TC^\cD(X) \le \TC^\cD(\pi).\end{eqnarray}
Moreover, if $\pi$ has cohomological dimension $\le 2$,
\begin{eqnarray}\label{equal}\TC^\cD(X) = \TC^\cD(\pi) .\end{eqnarray}
\end{corollary}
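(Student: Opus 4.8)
\emph{Proof proposal.} The plan is to deduce both statements from \propref{prop4} by exhibiting maps that induce isomorphisms on $\pi_1$. For the inequality~(\ref{leq}) I would take a classifying map $u\colon X\to K(\pi,1)$ for the universal cover of $X$ --- equivalently, the inclusion of $X$ into a $K(\pi,1)$ obtained from the CW complex $X$ by attaching cells of dimension $\ge 3$. Since $u_\ast\colon\pi_1(X)\to\pi_1(K(\pi,1))$ is an isomorphism, \propref{prop4} gives $\TC^\cD(X)\le\TC^\cD(K(\pi,1))=\TC^\cD(\pi)$, which is~(\ref{leq}).

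For the equality~(\ref{equal}) under the hypothesis $\cd\pi\le 2$, it suffices to construct a map $g\colon K(\pi,1)\to X$ inducing an isomorphism on fundamental groups: then \propref{prop4} yields $\TC^\cD(\pi)=\TC^\cD(K(\pi,1))\le\TC^\cD(X)$, and combining this with~(\ref{leq}) gives equality. I would build $g$ by obstruction theory on a CW model $Y=K(\pi,1)$. First define $g$ on the $2$-skeleton $Y^{(2)}$ so as to realize the identification $\pi_1(Y)=\pi=\pi_1(X)$: send the $1$-cells to loops representing a chosen generating set of $\pi$, so that each defining relator maps to a nullhomotopic loop and $g$ extends over the $2$-cells. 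Then extend $g$ over the higher skeleta one dimension at a time. The obstruction to extending $g$ from $Y^{(n)}$ to $Y^{(n+1)}$, for $n\ge 2$, lies in $H^{n+1}\big(Y;\pi_n(X)\big)=H^{n+1}\big(\pi;\pi_n(X)\big)$, where $\pi_n(X)$ carries its natural $\Z[\pi]$-module structure; since $n+1\ge 3>\cd\pi$ this group vanishes, so the extension exists at every stage and $g$ is defined on all of $Y$. Since the whole inductive construction only adjusts $g$ on cells of dimension $\ge 2$, the induced map $g_\ast$ on $\pi_1$ is the isomorphism fixed at the outset, as required.

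The main point to watch is the vanishing $H^{n+1}(\pi;\pi_n(X))=0$ for $n\ge 2$: this is exactly the definition of $\cd\pi\le 2$, applied to the local coefficient system determined by the action of $\pi_1(X)$ on $\pi_n(X)$, so one must be careful to work with local (not trivial) coefficients, and to note that re-choosing $g$ on cells of dimension $\ge 2$ does not disturb $g_\ast$ on $\pi_1$. Apart from this, both parts are immediate applications of \propref{prop4} together with standard obstruction theory for maps of CW complexes.
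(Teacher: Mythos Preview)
Your proof is correct and follows the same strategy as the paper: both parts reduce to \propref{prop4} via maps inducing $\pi_1$-isomorphisms, with the second part using obstruction theory and the hypothesis $\cd\pi\le 2$ to produce a map $K(\pi,1)\to X$. The only difference is cosmetic packaging: the paper converts the inclusion $X\hookrightarrow K=K(\pi,1)$ into a fibration with fibre $F$ and finds a \emph{section} $K\to X$ (the obstructions living in $H^{i+1}(\pi;\pi_i(F))$, where $\pi_i(F)\cong\pi_i(X)$ for $i\ge 2$ since $K$ is aspherical), whereas you build the map $g\colon K(\pi,1)\to X$ skeleton by skeleton from scratch. The obstruction groups are the same, and the paper's version has the marginal advantage that the section is automatically a $\pi_1$-isomorphism without having to track what happens on the $2$-skeleton.
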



\begin{proof}
First note that we may construct the Eilenberg--Mac Lane complex $K=K(\pi, 1)$ starting from $X$ and attaching cells of dimension $\ge 3$.
We may apply Proposition \ref{prop4} to the inclusion $X\subset K$ which obviously induces an isomorphism of fundamental groups; this gives inequality (\ref{leq}).

To prove (\ref{equal}) we first convert the inclusion $X \hookrightarrow K$ into a fibration with fibre $F$ satisfying $\pi_i(F)=\pi_{i+1}(K,X)$. Note that, since
$K$ is aspherical and $\pi_1(X) \cong \pi_1(K)$, we have $\pi_i(F)=\pi_{i+1}(K,X)=0$ for $i=0,1$. Now, the obstructions to finding a section of the fibration
$X \to K$ lie in the groups (with local coefficients) $H^{i+1}(K;\pi_i(F))=H^{i+1}(\pi;\pi_i(F))$. But by what we have said above, these are trivial for $i=0,1$.
Furthermore, the hypothesis that $\chd(\pi) \leq 2$ implies that $H^{i+1}(\pi;\pi_i(F))=0$ for $i >1$. Hence, all obstructions vanish and there is a section
$K \to X$. In particular, the section is an isomorphism on $\pi_1$ since $X \to K$ is, so we simply apply \propref{prop4} to get
$$\TC^\cD(\pi)=\TC^\cD(K) \leq \TC^\cD(X).$$
Combining this with the first part then gives equality.
\end{proof}

As a generalisation of the previous Corollary we state:

\begin{lemma}
Let $X$ be a path-connected cell complex such that for some integer $k\ge 2$ the homotopy groups $\pi_j(X)$ are zero for all
$j$ satisfying $1<j<k$. If the cohomological dimension of $\pi=\pi_1(X)$ is at most $k$ then $ \TC^\cD(X) = \TC^\cD(\pi)$.
\end{lemma}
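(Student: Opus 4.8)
The plan is to follow the proof of the preceding Corollary essentially verbatim, with the number $2$ replaced throughout by the general $k$; that Corollary is exactly the case $k=2$, in which the hypothesis ``$\pi_j(X)=0$ for $1<j<2$'' is vacuous. First I would realise $K=K(\pi,1)$ as a complex obtained from $X$ by attaching cells: since $\pi_j(X)=0$ for $1<j<k$, the first higher homotopy group that may be nonzero is $\pi_k(X)$, so $K$ can be built from $X$ by attaching cells of dimension $\ge k+1$ (first $(k+1)$-cells along generators of $\pi_k(X)$, then $(k+2)$-cells to kill the resulting $\pi_{k+1}$, and so on). The inclusion $X\hookrightarrow K$ induces an isomorphism on $\pi_1$, so \propref{prop4} gives at once $\TC^\cD(X)\le \TC^\cD(K)=\TC^\cD(\pi)$.

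For the reverse inequality I would convert $X\hookrightarrow K$ into a fibration $p\colon X\to K$ with fibre $F$ satisfying $\pi_i(F)=\pi_{i+1}(K,X)$. Because $K$ is obtained from $X$ by attaching only cells of dimension $\ge k+1$, the pair $(K,X)$ is $k$-connected, hence $\pi_i(F)=\pi_{i+1}(K,X)=0$ for all $i\le k-1$; in particular $F$ is $(k-1)$-connected and, since $k\ge 2$, simply connected, so obstruction theory with local coefficients applies. The obstructions to finding a section of $p$ lie in the groups $H^{i+1}(K;\pi_i(F))=H^{i+1}(\pi;\pi_i(F))$ for $i\ge 0$; these vanish for $i\le k-1$ because $\pi_i(F)=0$, and for $i\ge k$ because then $i+1>k\ge\chd(\pi)$. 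Hence a section $s\colon K\to X$ exists; as $p\circ s=\mathrm{id}_K$ and $p_\ast$ is an isomorphism on $\pi_1$, so is $s_\ast$, and \propref{prop4} yields $\TC^\cD(\pi)=\TC^\cD(K)\le\TC^\cD(X)$. Combining the two inequalities gives the desired equality.

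The only point requiring care — the main obstacle, such as it is — is the index bookkeeping: one must verify that the cell-attachment description of $K$ genuinely forces $\pi_i(F)=0$ for all $i\le k-1$, and that this range of vanishing dovetails exactly with the range $i+1>\chd(\pi)$ coming from the cohomological dimension hypothesis, so that no intermediate obstruction group is left uncontrolled. One should also observe that $\pi_1(K,X)=\pi_2(K,X)=0$, so that $F$ is nonempty and path-connected and the inductive construction of the section can be started; for $k\ge 2$ this is already contained in the $k$-connectedness of $(K,X)$, but it can alternatively be read off the long exact sequence of the pair using $\pi_2(K)=0$ and $\pi_1(X)\cong\pi_1(K)$.
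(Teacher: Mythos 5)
Your proposal is correct and follows essentially the same route as the paper: attach cells of dimension $\ge k+1$ to realise $K(\pi,1)$, apply \propref{prop4} to the inclusion, then use obstruction theory for a section of the associated fibration, with the obstruction groups $H^{i+1}(\pi;\pi_i(F))$ vanishing for $i\le k-1$ by the connectivity of the fibre and for $i\ge k$ by $\chd(\pi)\le k$. Your extra bookkeeping (simple connectivity of $F$, the exact dovetailing of the two vanishing ranges) only makes explicit what the paper leaves implicit.
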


\begin{proof}
As above, the Eilenberg--Mac Lane space $K=K(\pi, 1)$ can be obtained from $X$ by attaching cells of dimension $k+1, k+2, \dots$.
We  have $X\subset K$ with $\pi_{i+1}(K,X)=0$ for $i=0,1,\ldots,k-1$ and we may again convert the inclusion to a fibration $X \to K$ with
fibre $F$ satisfying $\pi_i(F)=0$ for $i=0,1,\ldots,k-1$. The obstructions to finding a section of $X\to K$ lie in the groups
$H^{i+1}(\pi, \pi_{i}(F)) = H^{i+1}(K, \pi_{i}(F))$ and all these groups vanish because of our computation with $\pi_i(F)$ and
our assumption $\cd(\pi)\le k$. Finally we apply Proposition \ref{prop4} to achieve equality.
\end{proof}

\begin{example}
Let $X$ be a finite cell complex with fundamental group $\pi$. Suppose that $\cd(\pi)>2\,\dim X$.
Then we  have
$$\TC^\cD(X) \le \TC(X) \le 2\, \dim X < \cd(\pi) \le \TC^\cD(\pi).$$
Thus one may construct many examples with $\TC^\cD(X) < \TC^\cD(\pi)$. For instance, we may take a 2-dimensional
finite cell complex $X$ with fundamental group $\Z^5$ (i.e. the $2$-skeleton of $T^5$). Furthermore, since every finitely
presented group $\pi$ appears as the fundamental group of a closed $4$-manifold $X$, the gap between $\TC^\cD(X)$ and
$\TC^\cD(\pi)=\TC(\pi)$ can be as large as desired.
\end{example}

Let $X$ be a connected, locally path connected and semi-locally simply connected space with universal cover $P\colon \widetilde X \to X$.
The Lusternik-Schnirelmann
\emph{one-category}, $\cat_1(X)$, is defined as the sectional category $\secat(P)$  of $P$. This
interpretation of one-category goes back to Schwarz (\cite{Sv}) who also showed that $\cat_1(X) = \cat(f)$, where
$f\colon X \to K(\pi_1(X),1)$ classifies the universal cover and $\cat(f)$ is the category of the map $f$ (also see \cite{OS2}).
This latter description easily implies, for instance, that $\cat_1(T^n \times Y)=n$ when
$Y$ is simply connected. We describe now a relation between $\cat_1(X)$ and $\TC^\cD(X)$ that is akin to
that between $\cat(X)$ and $\TC(X)$.

\begin{proposition}\label{thm:TCcat}
If $X$ is a connected, locally path connected and semi-locally simply connected topological space then
\begin{eqnarray}\label{cat1}\cat_1(X) \leq \TC^\cD(X) \leq \cat_1(X \times X).\end{eqnarray}
\end{proposition}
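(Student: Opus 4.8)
The plan is to mimic the classical argument that $\cat(X)\le\TC(X)\le\cat(X\times X)$, replacing the path fibration $\pi_X$ by the covering $q\colon\widehat{X\times X}\to X\times X$ classified by the diagonal, and using \propref{cor:covspace} to identify $\secat(q)=\TC^\cD(X)$ throughout. For the left-hand inequality, I would exhibit a map relating the universal cover $P\colon\widetilde X\to X$ to $q$. Concretely, restrict the covering $q$ over the diagonal $\Delta_X\subset X\times X$: the pullback of $q$ along the diagonal inclusion $d\colon X\to X\times X$ is the covering of $X$ corresponding to the preimage $d_\ast^{-1}(\Delta)$, which is all of $\pi$, so this pullback is the trivial covering and admits a global section. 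More useful is to instead produce a section of $P$ from a section of $q$ over a suitable subset, or to argue at the level of sectional category directly via a map of fibrations; I expect the cleanest route is to observe that precomposing $q$ with the diagonal yields a covering with a section, hence $\secat$ is subadditive in the appropriate sense. In fact the standard trick works: if $U\subset X$ has a section of $P$ over it, then $U\times X$ (or $d(U)$ suitably thickened) carries a section of $q$; conversely sections of $q$ over sets covering $X\times X$ restrict along $d$ to sections of the pullback covering over sets covering $X$, and the pullback covering along $d$ dominates $P$. This gives $\cat_1(X)=\secat(P)\le\secat(q)=\TC^\cD(X)$.

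For the right-hand inequality $\TC^\cD(X)\le\cat_1(X\times X)$, I would use the composition/product behaviour of coverings. Let $P'\colon\widetilde{X\times X}\to X\times X$ be the universal cover, so $\cat_1(X\times X)=\secat(P')$. Since $\Delta\subset\pi\times\pi$ is a subgroup, the universal cover $P'$ factors through $q$: there is a covering $r\colon\widetilde{X\times X}\to\widehat{X\times X}$ with $q\circ r=P'$. Then any open set $U\subset X\times X$ over which $P'$ has a section also has a section of $q$, namely $r$ composed with that section; hence $\secat(q)\le\secat(P')$, i.e. $\TC^\cD(X)\le\cat_1(X\times X)$. This is the easy direction and follows from the elementary monotonicity of sectional category under a map of fibrations over the same base (here $r$ is such a map from $P'$ to $q$).

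The main obstacle is the left-hand inequality $\cat_1(X)\le\TC^\cD(X)$, because there is no map of fibrations over a common base directly relating $P$ (over $X$) and $q$ (over $X\times X$); one must change base along the diagonal $d\colon X\to X\times X$. The key point to verify carefully is that the pullback covering $d^\ast q\to X$ admits the universal cover $P\colon\widetilde X\to X$ as a further covering, equivalently that $d_\ast(\pi_1 X)=\Delta$ sits inside $d_\ast^{-1}(\Delta)$ trivially — which it does, since $d_\ast\colon\pi\to\pi\times\pi$ has image exactly $\Delta$, so $d^\ast q$ is the trivial (disconnected) covering and in particular $\secat(d^\ast q)=0$. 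That makes the naive base-change argument collapse, so instead I would run the standard LS-category argument at the level of open covers: given an open cover $\{U_i\}$ of $X\times X$ with sections $\sigma_i\colon U_i\to\widehat{X\times X}$ of $q$, the sets $V_i=d^{-1}(U_i)$ cover $X$, and I need sections of $P$ over each $V_i$. Over $V_i$ the composite section $\sigma_i\circ d$ lands in $d^\ast q$; combining with the comparison map from $\widetilde X$ (using that $\widetilde X\times\{\text{pt}\}$ or a diagonal lift maps appropriately into $\widehat{X\times X}$), one extracts a lift $V_i\to\widetilde X$ of the inclusion. Making this last extraction precise — choosing consistent basepoints and invoking the lifting criterion for coverings on the $\pi_1$-level, using exactly the hypothesis that $\pi_1(U_i)\to\pi_1(X\times X)$ has conjugate-to-diagonal image — is the technical heart of the proof, and I would cite \propref{cor:covspace} together with the covering lifting criterion to close it.
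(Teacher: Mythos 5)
Your right-hand inequality is correct and is essentially the paper's argument: the universal cover of $X\times X$ factors through $q$ because the trivial subgroup sits inside $\Delta$, so every section of the universal covering over an open set composes to a section of $q$, giving $\secat(q)\le \cat_1(X\times X)$.

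The left-hand inequality, however, has a genuine gap, and the route you sketch cannot be repaired as stated. Restricting along the diagonal $d\colon X\to X\times X$ gives no information: since $d_*\pi_1(X)=\Delta$, the lifting criterion shows that $d|_V$ lifts through $q$ for \emph{every} open $V\subset X$, even $V=X$, so the existence of $\sigma_i\circ d$ on $V_i=d^{-1}(U_i)$ imposes no condition, and no lift $V_i\to\tilde X$ of the inclusion can be extracted from it. Concretely, take $X=S^1$ and $U_0=\{(x,y)\in S^1\times S^1:\ y\neq -x\}$: this set deformation retracts to the diagonal circle, so the image of $\pi_1(U_0)$ is exactly $\Delta$ and $q$ has a section over $U_0$; yet $d^{-1}(U_0)=S^1$ is the whole circle, over which the universal cover $\R\to S^1$ has no section. (Your parenthetical claim that the pullback along $d$ ``dominates $P$'' is backwards — that pullback is trivial, as you yourself note — and the claim that $U\times X$ carries a section of $q$ when $P$ has a section over $U$ is also false, since $1\times\pi$ is not contained in any conjugate of $\Delta$.)

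The missing idea, which is the paper's proof, is to change base along the axis inclusion $f(x)=(x,x_0)$ rather than the diagonal. Since $f_*(\pi)=\pi\times 1$ and $\Delta$ together generate $\pi\times\pi$, the restriction $\overline X=q^{-1}(f(X))\to X$ of $q$ over $f(X)$ is a \emph{connected} covering corresponding to the subgroup $f_*^{-1}(\Delta)=\{1\}$, i.e.\ it is the universal cover $P\colon\tilde X\to X$. A section of $q$ over $U_i$ then restricts to a section of $P$ over $f^{-1}(U_i)$, and the sets $f^{-1}(U_i)$ cover $X$, which yields $\cat_1(X)=\secat(P)\le\secat(q)=\TC^\cD(X)$ as required.
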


\begin{proof}
Consider first the following general situation. Let $p:\tilde Z\to Z$ be a covering map with $\tilde Z$ connected and
$p_\ast(\pi_1(\tilde Z, \tilde z_0)) = H\subset \pi_1(Z, z_0)$. Let $f: A\to Z$ be an inclusion of a connected subspace. We obtain a pull-back diagram

$$
\xymatrix{
\tilde Z_A \ar[r]\ar[d]_-{p_A} &\tilde Z\ar[d]^-p\\
A\ar[r]^f_\subset & Z
}
$$
in which $\tilde Z_A$ can be identified with the preimage of $A$ under $p$ and $p_A$ is the restriction of $p$. Clearly the map $p_A$ is a covering map.
The set $\tilde Z_A$ is connected if and only if $f_*(\pi_1(A))$ and $p_*(\pi_1(\tilde Z))$ span $\pi_1(Z)$.
In that case $p_A$ is a connected covering corresponding to the subgroup $f_\ast^{-1}(H)\subset \pi_1(A)$.

Now consider the following diagram.
\begin{eqnarray}\label{diag1}
\xymatrix{
{\overline X} \ar[r] \ar[d]_{p'} & \widehat{X\times X} \ar[d]_q & \tilde X \times\tilde X \ar[l] \ar[d]^{P\times P}\\
X \ar[r]^-f_-\subset & X\times X & X\times X \ar[l]_-{=}
}
\end{eqnarray}
Here $P: \tilde X\to X$ is the universal cover of $X$ and $q: \widehat{X\times X} \to X\times X$ is the cover corresponding the diagonal subgroup $\Delta\subset \pi\times\pi$.
The map $f$ is an inclusion $f(x)=(x, x_0)$, where $x\in X$ and $x_0\in X$ is a base-point and $\overline X$ is the preimage $q^{-1}(f(X))$.
To apply the remark of the preceding paragraph, note that $f_*(\pi_1(X))$ and $q_*(\pi_1(\widehat{X \times X}))$ span $\pi_1(X \times X)$.
Hence it follows that $p': \overline X\to X$ is the universal cover of $X$.

Given an open subset $U\subset X\times X$ with a section $s: U\to \widehat{X\times X}$ we may restrict it to $f^{-1}(U)\subset X$
getting a section $s': f^{-1}(U)\to \overline X$. This shows that $\cat_1(X) = \secat (p')\le \secat (q)=\TC^\cD(X)$, thus proving the left inequality (\ref{cat1}).

Next we consider the right square of the diagram (\ref{diag1}). The map $P\times P$ is the universal covering and hence
$\secat(P\times P) =\cat_1(X\times X)\ge \secat(q) =\TC^\cD(X)$.
\end{proof}

%
%
%
%
%

Next we examine the case of real projective spaces.

\begin{proposition}
For any $n\ge 1$ one has
$$\TC^\cD(\RP^n) = \TC(\RP^n).$$
Hence, $\TC^\cD(\RP^n)=n$ for $n=1, 3, 7$ and for any $n\not=1, 3, 7$ the number $\TC^\cD(\RP^n)$ equals the smallest integer $k=k(n)$ such that
the projective space $\RP^n$ admits an immersion into $\R^k$.
\end{proposition}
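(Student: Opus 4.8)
The plan is to prove the equality $\TC^\cD(\RP^n)=\TC(\RP^n)$ and then read off the value of $\TC(\RP^n)$ from \cite{FTY}. For $n=1$ we have $\RP^1\cong S^1$, and the identity $\TC^\cD(S^1)=1=\TC(S^1)$ has already been recorded above, so I may assume $n\ge 2$, where $\pi:=\pi_1(\RP^n)\cong\Z/2$ and $\pi\times\pi$ has order four.

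One half, $\TC^\cD(\RP^n)\le\TC(\RP^n)$, is immediate from \propref{prop:nonasph}. For the reverse inequality I would first rewrite the left-hand side using the covering-space description from \propref{cor:covspace} together with the proof of \propref{prop:nonasph}: $\TC^\cD(\RP^n)=\secat(Q)$, where $Q\colon S^n\times_\pi S^n\to\RP^n\times\RP^n$ is the connected double cover associated to the diagonal $\Delta\subset\pi\times\pi$. So it suffices to establish $\TC(\RP^n)\le\secat(Q)$, that is, to convert a cover of $\RP^n\times\RP^n$ by $k+1$ open sets each admitting a section of $Q$ into a cover by $k+1$ open sets each admitting a section of the path fibration $\pi_{\RP^n}$.

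The key construction — which is essentially the upper-bound half of the analysis of $\TC(\RP^n)$ in \cite{FTY} — is as follows. A section of $Q$ over an open set $U$ is precisely a continuous assignment to each $([x],[y])\in U$ of an unordered pair $\{(\tilde x,\tilde y),(-\tilde x,-\tilde y)\}$ of lifts in $S^n\times S^n$. Over the open subset $U'\subseteq U$ where $\tilde y\neq-\tilde x$, the straight segment $t\mapsto(1-t)\tilde x+t\tilde y$ in $\R^{n+1}$ misses the origin, so renormalising it into $S^n$ and projecting to $\RP^n$ gives a section of $\pi_{\RP^n}$ over $U'$ that is insensitive to the sign ambiguity. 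The complementary locus $U\setminus U'$, where the chosen lifts are antipodal, lies inside the diagonal $\Delta_{\RP^n}$ (since $\tilde y=-\tilde x$ forces $[x]=[y]$), and near the diagonal there is a canonical motion planner given by short geodesics. The delicate point — and the step I expect to be the main obstacle — is to amalgamate these two ingredients over each of the given open sets \emph{without} increasing their number; this is precisely where one exploits that the bad locus is confined to the diagonal, and it is carried through in \cite{FTY}. Combining $\TC(\RP^n)\le\secat(Q)$ with \propref{prop:nonasph} yields $\TC^\cD(\RP^n)=\secat(Q)=\TC(\RP^n)$.

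Finally I would substitute the value of $\TC(\RP^n)$ established in \cite{FTY}: it equals $n$ exactly when $\RP^n$ is parallelizable, namely for $n=1,3,7$, and for every other $n$ it equals the least $k$ for which $\RP^n$ immerses in $\R^k$ (this last equality resting on the classical identification of the immersion dimension of $\RP^n$ with the smallest dimension of a target projective space admitting an axial map from $\RP^n\times\RP^n$). Transporting this along $\TC^\cD(\RP^n)=\TC(\RP^n)$ gives the asserted formula for $\TC^\cD(\RP^n)$.
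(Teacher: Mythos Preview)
Your argument is correct and, like the paper's, ultimately rests on the analysis of \cite{FTY}; the identification $\TC^\cD(\RP^n)=\secat(Q)$ together with the \cite{FTY} construction of motion planners from sections of the diagonal double cover $Q$ is exactly the content the paper invokes when it cites Theorem~4.1, Corollary~4.4 and Propositions~6.2--6.3 of \cite{FTY} for $n\neq 1,3,7$.

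The one organisational difference worth noting is in the handling of the parallelizable cases $n=1,3,7$. You fold these into the same geometric argument (and treat $n=1$ by the earlier example), whereas the paper bypasses the \cite{FTY} construction entirely here: it uses $\TC(\RP^n)=n=\cat(\RP^n)=\cat_1(\RP^n)$ and then appeals to the general inequality $\cat_1(X)\le\TC^\cD(X)$ of \propref{thm:TCcat} to get the lower bound $\TC^\cD(\RP^n)\ge n$ directly. This is a bit cleaner for those three values of $n$, since it avoids any discussion of the amalgamation step, but of course your uniform treatment is also fine.
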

\begin{proof} Assume first that $n\not=1, 3, 7$. In this case we may combine  Theorem 4.1, Corollary 4.4, Propositions 6.2 and 6.3 from \cite{FTY}
which imply our statement for $n\not=1,3, 7$. In the remaining case, i.e. when $n=1, 3, 7$, we know by \cite{FTY} that
$$\TC(\RP^n) = n=\cat(\RP^n)= \cat_1(\RP^n)\le \TC^\cD(\RP^n).$$
(Note that, in \cite{FTY}, the non-normalised convention $\TC(\ast)=1$ was used rather than the normalized convention $\TC(\ast)=0$ of this paper.)
The statement now follows from Proposition \ref{prop:nonasph}.
\end{proof}

\begin{remark}\label{rem:immerse}
In \cite[Theorem 4.5]{FTY} it is shown that for $n=2^k$, $\TC(\RP^n)=2^{k+1}-1$.  When $k=3$, for instance, we see that the lowest immersion
dimension for $\RP^8$ is $15$, so that $\TC^\cD(\RP^8)=15$, a result that would be difficult to obtain
directly from the definition.
\end{remark}

The next two results are analogues of results that equate $\TC$ to Lusternik-Schnirelman category for certain types of spaces. Although the first result
follows from the more general second, the relative simplicity of the proof in the presence of greater structure recommends its inclusion.

\begin{proposition}\label{group}
For any connected topological group $G$ one has $$\TC^D(G)=\cat_1(G).$$
\end{proposition}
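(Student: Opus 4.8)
The plan is to exploit the classical fact that for a connected topological group $G$, the product fibration $\pi_G \colon G^I \to G \times G$ is fibre-homotopy equivalent to a pullback of the path-loop fibration over $G$, and that this equivalence is compatible with the covering-space description of $\TC^\cD$ given in \propref{cor:covspace}. Concretely, translation in the group gives a homeomorphism $G \times G \to G \times G$, $(a,b) \mapsto (a, a^{-1}b)$, under which the diagonal $\Delta_G$ corresponds to $G \times \{e\}$, and the path fibration $\pi_G$ corresponds (up to fibre-homotopy equivalence) to $\mathrm{pr}_1 \circ (\mathrm{id}_G \times \pi_G|)$, i.e. essentially to the projection $G \times P_eG \to G \times G$ where $P_eG$ is the based path space. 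Since $\TC$ is fibre-homotopy invariant, this already gives the known equality $\TC(G) = \cat(G)$; here we want the $\cD$-refinement.

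First I would set up the covering-space models. By \propref{cor:covspace}, $\TC^\cD(G) = \secat(q)$ where $q \colon \widehat{G \times G} \to G \times G$ is the cover corresponding to $\Delta \subset \pi \times \pi$; equivalently, as in the proof of \propref{prop:nonasph}, $\TC^\cD(G) = \secat(Q)$ with $Q \colon \tilde G \times_\pi \tilde G \to G \times G$. On the other side, $\cat_1(G) = \secat(P)$ where $P \colon \tilde G \to G$ is the universal cover. The key step is to show that under the translation homeomorphism $\tau \colon G \times G \to G \times G$, the covering $Q$ pulls back to the covering $P \times \mathrm{id}_G \colon \tilde G \times G \to G \times G$ (up to isomorphism of coverings over $G \times G$). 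Indeed, $\tilde G$ is itself a topological group (the universal cover of a topological group is a group), the diagonal $\pi$-action on $\tilde G \times \tilde G$ becomes, after the corresponding translation on $\tilde G \times \tilde G$, the action only on the first factor, so $\tilde G \times_\pi \tilde G \cong (\tilde G \times_\pi \tilde G_{\mathrm{triv}}) = (\tilde G/\text{nothing}) \times G$; more carefully, the map $\tilde G \times \tilde G \to \tilde G \times G$, $(\tilde a, \tilde b) \mapsto (\tilde a, P(\tilde a)^{-1}P(\tilde b)) = (\tilde a, P(\tilde a^{-1}\tilde b))$ descends to the quotient and identifies $\tilde G \times_\pi \tilde G$ with $\tilde G \times G$ compatibly with $\tau \circ Q$ and $P \times \mathrm{id}$.

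Once the covering $Q$ is identified with $(P \times \mathrm{id}_G) \colon \tilde G \times G \to G \times G$ up to homeomorphism of the base, I would conclude by a standard sectional-category computation: a local section of $P \times \mathrm{id}_G$ over an open set $V \subset G \times G$ amounts to a local section of $P$ over $\mathrm{pr}_1(V)$-type data, and in fact $\secat(P \times \mathrm{id}_G) = \secat(P)$ because $P \times \mathrm{id}_G$ is the pullback of $P$ along $\mathrm{pr}_1 \colon G \times G \to G$, a map admitting a section, so sectional category is unchanged (pulling back along a map with a section does not increase $\secat$, and the section of $\mathrm{pr}_1$ lets one push sections the other way). Hence $\TC^\cD(G) = \secat(Q) = \secat(P \times \mathrm{id}_G) = \secat(P) = \cat_1(G)$.

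The main obstacle I anticipate is the careful bookkeeping in the identification of the covering spaces: one must verify that $\tilde G$ genuinely carries a compatible group structure lifting that of $G$ (true since $P$ is a covering homomorphism for the unique lift of multiplication fixing a chosen lift of $e$), that the deck group $\pi = \ker(\tilde G \to G)$ is central in $\tilde G$ (so that "diagonal action" makes unambiguous sense), and that the translation maps are genuinely continuous homeomorphisms intertwining everything — essentially the content of the classical argument that $\TC(G) = \cat(G)$, reorganized to keep track of fundamental-group data. None of these points is deep, but assembling them into a clean commutative diagram of coverings over $G \times G$ is where the real work lies; the sectional-category bookkeeping at the end is routine given \propref{cor:covspace}.
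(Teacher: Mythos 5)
Your proof is correct in substance and, at bottom, rests on the same idea as the paper's: group translation (equivalently the division map $(a,b)\mapsto ab^{-1}$) untwists the diagonal covering against the universal covering. The paper implements this by pulling back $P\colon \tilde G\to G$ along $F(a,b)=ab^{-1}$ after checking $\ker F_*=\Delta$, which yields only the inequality $\TC^\cD(G)=\secat(q)\le\secat(P)=\cat_1(G)$, the reverse inequality being quoted from \propref{thm:TCcat}. You instead identify the covering $Q\colon \tilde G\times_\pi\tilde G\to G\times G$ of \propref{prop:nonasph} explicitly, over the translation homeomorphism $\tau(a,b)=(a,a^{-1}b)$ of the base, with a product covering, and then get both inequalities at once from the two pullback comparisons (along the projection to the $\tilde G$-factor's base coordinate and along its section $g\mapsto(g,e)$); this buys you independence from \propref{thm:TCcat}, at the cost of having to verify that $\tilde G$ is a topological group with $P$ a homomorphism and deck group $\ker P$ (and, as in the paper, that $G$ admits a universal cover at all). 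One slip to fix: your displayed map $(\tilde a,\tilde b)\mapsto\bigl(\tilde a, P(\tilde a^{-1}\tilde b)\bigr)$ does \emph{not} descend to $\tilde G\times_\pi\tilde G$, because the first coordinate is moved by the diagonal action. The correct identification is $[\tilde a,\tilde b]\mapsto\bigl(P\tilde a,\ \tilde a^{-1}\tilde b\bigr)$, a homeomorphism $\tilde G\times_\pi\tilde G\cong G\times\tilde G$ --- the universal-cover factor survives in the translated slot, consistent with your own remark that after translation the diagonal action acts on the first factor only --- and under it $\tau\circ Q$ becomes $\mathrm{id}_G\times P$. With that correction (and noting $\secat(\tau\circ Q)=\secat(Q)$ since $\tau$ is a homeomorphism), your argument goes through and gives the stated equality.
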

\begin{proof}
Let $F: G\times G\to G$ be the map given by the formula $F(a, b) =ab^{-1}$. Denote $\pi=\pi_1(G, e)$ and consider the induced map on fundamental groups
$$\phi= F_\ast: \pi \times \pi = \pi_1(G\times G, e\times e) \to \pi.$$
We claim that the kernel of $\phi$ coincides with the diagonal subgroup $\Delta\subset \pi\times\pi$. Obviously the kernel of $\phi$ contains
$\Delta$. On the other hand, standard Eckmann-Hilton arguments used in the proof that $\pi_1(G)$ is abelian show that $F_*(a,b)=a-b$, so that
${\mathrm Ker}(\phi)=\Delta$.

This gives a pullback diagram of covering maps
$$
\xymatrix{
\widehat{G\times G} \ar[d]_q \ar[r]^{\tilde F} & \tilde G\ar[d]^P\\
G\times G \ar[r]^F & G
}
$$
where $P$ is the universal covering and $q$ is the covering corresponding to the diagonal subgroup.
From this diagram we obtain
$$\TC^\cD(G) =\secat(q) \le \secat(P) =\cat_1(G).$$
This complements the left inequality of Proposition \ref{thm:TCcat}.
\end{proof}

Next we give a generalisation of Proposition \ref{group}.

\begin{theorem}\label{thm:CWHspace}
 Let $X$ be a connected CW $H$-space. Then $$\TC^\cD(X)=\cat_1(X).$$
\end{theorem}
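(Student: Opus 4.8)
The plan is to mimic the proof of Proposition~\ref{group}, replacing the strict multiplication $F(a,b)=ab^{-1}$ by a homotopy-coherent substitute. An $H$-space has a multiplication $\mu\colon X\times X\to X$ with a two-sided homotopy unit; after replacing $X$ by a homotopy-equivalent CW $H$-space if necessary, we may assume $\mu$ is strictly unital. The key structural fact we need is that, on fundamental groups, $\mu_\ast\colon \pi_1(X\times X,(e,e))=\pi\times\pi\to\pi$ is the homomorphism $(a,b)\mapsto ab$; this is the Eckmann--Hilton argument, which shows moreover that $\pi=\pi_1(X)$ is abelian. We also need a ``difference'' or ``division'' map. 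Since $X$ is a connected CW $H$-space, it has a homotopy inverse $\iota\colon X\to X$ (connected $H$-complexes are grouplike), so we may form $F=\mu\circ(\mathrm{id}\times\iota)\colon X\times X\to X$, $F\simeq (a,b)\mapsto \mu(a,\iota(b))$. On $\pi_1$ we then get $F_\ast(a,b)=a-b$ (writing $\pi$ additively), exactly as in Proposition~\ref{group}, so $\ker(F_\ast)=\Delta\subset\pi\times\pi$.

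Next I would promote this to covering spaces. Let $P\colon\tilde X\to X$ be the universal cover and $q\colon\widehat{X\times X}\to X\times X$ the cover corresponding to $\Delta\subset\pi\times\pi$, so that $\TC^\cD(X)=\secat(q)$ by Proposition~\ref{cor:covspace}. Since $F_\ast$ carries $\pi_1(\widehat{X\times X})=\Delta$ into $P_\ast\pi_1(\tilde X)=1$, the map $F\circ q\colon\widehat{X\times X}\to X$ lifts through $P$, giving $\tilde F\colon\widehat{X\times X}\to\tilde X$ with $P\circ\tilde F=F\circ q$. This is the analogue of the pullback square in Proposition~\ref{group}; here it need only be a commutative square, not a pullback, which is all that the sectional-category argument requires. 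Then, given an open $U\subseteq X$ with a section $\sigma\colon U\to\tilde X$ of $P$, one pulls back along $F$: over the open set $V=(\mathrm{id}\times e)^{-1}$ — more simply, restrict along the inclusion $j\colon X\to X\times X$, $j(x)=(x,e)$, which satisfies $F\circ j\simeq\mathrm{id}_X$. Composing a section of $q$ over $(F|)^{-1}(U)$ with $\tilde F$ gives a section of $P$, and conversely, yielding $\secat(q)\le\secat(P)$, i.e. $\TC^\cD(X)\le\cat_1(X)$. The reverse inequality $\cat_1(X)\le\TC^\cD(X)$ is already supplied in full generality by the left-hand inequality of Proposition~\ref{thm:TCcat}.

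The main obstacle is bookkeeping around the fact that $F$ is only a homotopy-theoretic division map: $F\circ(\mathrm{id}\times e)$ is merely homotopic, not equal, to $\mathrm{id}_X$, and $F_\ast$ computes the difference only up to the Eckmann--Hilton identification. Consequently the covering-space diagram is a homotopy-commutative square rather than a strict pullback, so one cannot identify $\widehat{X\times X}$ as a literal pulled-back cover of $\tilde X$; instead one argues directly with lifts and local sections, using that $\secat$ depends only on the homotopy class of the maps involved and that a homotopy between maps into $X\times X$ lifts, over an open set admitting a partial section, to the relevant covers. A clean way to package this is: $F\simeq F'$ where $F'$ is a fibration, replace $q$ by the pullback $F'^\ast P$, check it is the cover classified by $\Delta$ (using $F'_\ast=F_\ast$ on $\pi_1$), and then run the Proposition~\ref{group} argument verbatim. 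Once this reduction is in place, everything else is routine.
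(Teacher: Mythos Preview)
Your approach is correct and coincides with the paper's \emph{alternative} proof of this theorem: both use that a connected CW $H$-space is grouplike, form the division map $F=\mu\circ(1\times\iota)$, compute $F_\ast(a,b)=a-b$ on $\pi_1$ via Eckmann--Hilton, and compare $q$ with the pullback of the universal cover along $F$. Your middle paragraph is muddled---at one point you compose a section of $q$ with $\tilde F$ to produce a section of $P$, which is the wrong direction and does not yield $\secat(q)\le\secat(P)$ from a mere commuting square---but your final ``clean way'' is exactly right: since $F_\ast^{-1}(1)=\Delta$, the pullback $F^\ast P$ is the connected cover of $X\times X$ classified by $\Delta$, hence is $q$ itself, and $\secat(q)=\secat(F^\ast P)\le\secat(P)=\cat_1(X)$. (No fibrant replacement of $F$ is needed; the pullback of a covering along any map is again a covering, classified by the preimage subgroup.) The paper's alternative proof phrases the same computation as checking that the lifted square is a pullback by verifying that $\tilde F$ is a bijection on fibres.

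The paper's \emph{primary} proof is organized slightly differently: instead of invoking a homotopy inverse, it cites James' theorem that $[A,X]$ is an algebraic loop to obtain a difference map $D$ with $p_1\cdot D\simeq p_2$, pulls $P$ back along $D$ to a cover $\rho$ with $\secat(\rho)\le\cat_1(X)$, and then uses $\rho_\ast\pi_1\subset\ker D_\ast=\Delta$ to factor $\rho$ through $q$, giving $\secat(q)\le\secat(\rho)$. Your route simply identifies the pulled-back cover with $q$ outright rather than factoring through it; the content is the same.
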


\begin{proof}
The proof given below is based on arguments used in \cite{LuptonScherer} to show that $\TC(X)=\cat(X)$ when $X$ is an $H$-space.

  Let $m:X\times X\to X$ denote the multiplication, which may be assumed to have a strict unit given by the base point $x_0\in X$.
  If $A$ is a based CW complex and $f,g:A\to X$ are based maps, their pointwise product $f\cdot g: A\to X$ is defined by
  $(f\cdot g)(a)=m(f(a),g(a))$ for all $a\in A$. By a theorem of James \cite{James}, the pointwise product endows the set of based
  homotopy classes $[A,X]$ with the structure of an \emph{algebraic loop}. In particular, equations of the form
  \[
  x\cdot a = b,\qquad a\cdot y = b,\qquad a,b\in [A,X]
  \]
  admit unique solutions $x,y\in [A,X]$.

  Let $p_1,p_2:X\times X\to X$ denote the coordinate projections. The loop structure on $[X\times X,X]$ guarantees the existence of
  a \emph{difference map} $D:X\times X\to X$ with the property that $p_1\cdot D \simeq p_2: X\times X\to X$.

   We claim that the induced homomorphism $D_*:\pi_1(X\times X)=\pi_1(X)\times\pi_1(X)\to \pi_1(X)$ on fundamental groups is given
   by $D_*(a,b)=b-a$ for all $a,b\in \pi_1(X)$.
  To see this, recall that the standard proof that $\pi_1(X)$ is abelian when $X$ is an $H$-space proceeds by showing that the two binary operations
  $+:\pi_1(X)\times\pi_1(X)\to \pi_1(X)$ and $\cdot:\pi_1(X)\times\pi_1(X)\to \pi_1(X)$, given respectively by concatenation and pointwise product
  of loops, share a two-sided identity and are mutually distributive. Therefore they agree. It follows that
  \begin{align*}
   a + D_*(a,b) & = (p_1)_*(a,b) + D_*(a,b) \\
   & = (p_1)_*(a,b)\cdot D_*(a,b) \\
    & = (p_1\cdot D)_*(a,b) \\
    & = (p_2)_*(a,b) \\
    & = b,
   \end{align*}
   which proves the claim.

   Now form the pullback of the universal cover $p:\widetilde{X}\to X$ along the map $D:X\times X\to X$ to obtain a covering $\rho:P\to X\times X$
   with $\secat(\rho)\leq\secat(p)=\cat_1(X)$. The image $\rho_*\pi_1(P)$ in $\pi_1(X\times X)$ is contained in the kernel of $D_*$, which as we
   have just seen equals the diagonal subgroup. Hence there is a lift $P\to \widehat{X \times X}$ of $\rho$ through the covering
   $q:\widehat{X \times X}\to X\times X$, which gives $\TC^\cD(X)=\secat(q)\leq\secat(\rho)$.

   Combining the two inequalities above, we have that $\TC^\cD(X)\leq \cat_1(X)$ when $X$ is an $H$-space.
   The opposite inequality is given by \propref{thm:TCcat}.
\end{proof}

In the somewhat overlooked paper \cite{Hand}, D. Handel shows that the free path fibration $X^I \to X \times X$ is a pullback of the based path
fibration $PX \to X$ over a certain map $h \colon X \times X \to X$ (which we describe below) if and only if $X$ is a CW H-space. This in itself implies
$\TC(X)=\cat(X)$ by standard inequalities, but it also can be used to give another proof of \thmref{thm:CWHspace}. Both proofs use special properties
of CW H-spaces, so it is worthwhile seeing each of them.

\begin{proof}[Alternative Proof of \thmref{thm:CWHspace}]
We use the notation of the proof of \thmref{thm:CWHspace}. The loop structure of $[X,X]$ shows that there is a right inverse map
$\eta\colon X \to X$ with the property that
$$X \stackrel{\Delta}{\to} X \times X \xrightarrow{1_X \times \eta} X \times X \stackrel{m}{\to} X$$
is nullhomotopic. Then we define $h=m(1_X \times \eta)$. On homology, this map induces $h_*(a,b)=a-b$ and since, for an H-space,
$\pi_1(X)=H_1(X)$, we see that $h_\#(a,b)=a-b$ as well. Then, since the diagonal $\Delta(\pi) \subset \pi \times \pi$ is the kernel of
this homomorphism, we can lift $h$ to $\tilde h\colon \widehat{X \times X} \to \tilde X$ as in
$$\xymatrix{
\widehat{X \times X} \ar[r]^-{\tilde h} \ar[d]_-q & \tilde X \ar[d]^-P \\
X \times X \ar[r]^-h & X .
}
$$
But the homomorphism $h_\#$ induces an isomorphism $(\pi \times \pi)\slash \Delta(\pi) \cong \pi$ and this in turn shows that
$\tilde h$ restricted to each fibre $(\pi \times \pi)\slash \Delta(\pi) \to \pi$ is a bijection. This then implies that the diagram above
is a pullback and the usual sectional category inequality gives
$$\TC^\cD(X) = \secat(q) \leq \secat(P) = \cat_1(X).$$
\end{proof}

\section{Fibration over a Covering and the Invariant $\widetilde \TC(X)$}\label{sec:fibcov}

Consider the situation
$$E\stackrel p\to \overline X \stackrel q \to X$$
where $p$ is a fibration with fibre $F$ and $q$ is a covering map where the space $\overline X$ is connected. The composition
$$q\circ p: E\to X$$
is a fibration with fibre $F'$ which is homeomorphic to the product $F\times F_0$ where $F_0$ is the fibre of $q$, i.e. a discrete set.


\begin{definition}\label{deftld} Define the number $\widetilde {\rm secat}(E\stackrel p\to \overline X\stackrel q\to X)$ as the minimal integer
$k\ge 0$ such that $X$ admits an open cover $X=U_0\cup \dots \cup U_k$ with the property that for each $i=0, 1, \dots, k$ the fibration
$p$ admits a continuous section over $q^{-1}(U_i)\subset \overline X$.
\end{definition}


We see immediately from the definition that
\begin{eqnarray}\label{tscsecat}
\tsc \ge \secat (p),
\end{eqnarray}
and
$\tsc =0$ if and only if the fibration $p: E\to \overline X$ admits a continuous section, i.e. when $\secat(p)=0$. Presently, we have no examples
where the inequality (\ref{tscsecat}) is strict. We then obtain the following estimate.

\begin{proposition}\label{prop1}
One has
\begin{eqnarray}\label{upper1}
\secat(q\circ p) \le \secat (q: \overline X \to X) + \tsc.
\end{eqnarray}
\end{proposition}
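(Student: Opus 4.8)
The plan is to take optimal open covers realising the two quantities on the right-hand side and combine them into an open cover of $X$ realising the left-hand side. Write $a = \secat(q\colon \overline X\to X)$ and $b = \tsc$. The covering $q\colon\overline X\to X$ being a fibration (covering maps are fibrations), its sectional category is realised by an open cover $X = V_0\cup\dots\cup V_a$ together with continuous sections $\sigma_j\colon V_j\to\overline X$ of $q$ over each $V_j$. On the other hand, by \defref{deftld} there is an open cover $X = W_0\cup\dots\cup W_b$ such that the fibration $p\colon E\to\overline X$ admits a continuous section $\tau_i\colon q^{-1}(W_i)\to E$ over each $q^{-1}(W_i)$.

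Next I would form the common refinement: for each pair $(i,j)$ with $0\le i\le b$ and $0\le j\le a$, set $O_{ij} = V_j\cap W_i\subset X$. These sets cover $X$. Over $O_{ij}$ we have the section $\sigma_j|_{O_{ij}}\colon O_{ij}\to\overline X$ of $q$, whose image lies in $q^{-1}(W_i)$ since $O_{ij}\subset W_i$; composing with $\tau_i$ gives a continuous map $\tau_i\circ(\sigma_j|_{O_{ij}})\colon O_{ij}\to E$. Because $p\circ\tau_i = \mathrm{id}$ on $q^{-1}(W_i)$ and $q\circ\sigma_j = \mathrm{id}$ on $V_j$, this composite is a continuous section of $q\circ p\colon E\to X$ over $O_{ij}$. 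Thus $q\circ p$ admits local sections over each of the $(a+1)(b+1)$ sets $O_{ij}$.

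The final step is the standard reduction of the number of open sets from a product-indexed family to a sum-indexed one. For a fibration, if $X$ is covered by sets indexed by pairs $(i,j)\in\{0,\dots,b\}\times\{0,\dots,a\}$, each admitting a section, then grouping the sets along ``antidiagonals'' $i+j = \mathrm{const}$ and taking, for each fixed value $r = i+j \in\{0,\dots,a+b\}$, the union $\bigcup_{i+j=r} O_{ij}$, one obtains an open cover of $X$ by $a+b+1$ sets over each of which $q\circ p$ still admits a section. This uses the elementary lemma (for fibrations) that a section over a disjoint-in-$X$ union of open sets, each carrying a section, can be assembled into a single section provided the sets are pairwise disjoint; here one arranges disjointness by shrinking, using normality of $X$ (or the analogous argument via Schwarz's characterisation of sectional category), exactly as in the classical proof that $\secat$ is subadditive. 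Hence $\secat(q\circ p)\le a+b = \secat(q)+\tsc$.

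The main obstacle is the last step: one must be careful that the antidiagonal grouping genuinely produces a section over each union and not merely local sections. The standard fix is to first shrink the cover $\{O_{ij}\}$ to a cover $\{O'_{ij}\}$ with $\overline{O'_{ij}}\subset O_{ij}$ (possible since $X$ is a simplicial complex, hence normal and paracompact), and then for fixed $r$ to further separate the finitely many sets $\{O'_{ij} : i+j=r\}$ into pairwise disjoint open sets by a second shrinking, so that the individual sections glue trivially; this is precisely the argument behind the subadditivity $\secat(f\circ g)\le\secat(f)+\secat(g)$ of Schwarz, adapted here to the ``relative'' sectional category of \defref{deftld}. Everything else is a routine diagram chase with the maps $\sigma_j$ and $\tau_i$.
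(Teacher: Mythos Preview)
Your overall strategy agrees with the paper's: both arguments reduce the proposition to a ``mixing of covers'' problem on the single base $X$, namely combining a cover $\{V_j\}_{j=0}^a$ whose members carry sections of $q$ with a cover $\{W_i\}_{i=0}^b$ whose members $W_i$ are such that $p$ admits a section over $q^{-1}(W_i)$, into a single cover of $X$ by $a+b+1$ open sets each enjoying \emph{both} properties. Since both properties pass to open subsets and disjoint unions, and a set with both properties supports a section of $q\circ p$ (your second paragraph), this yields the inequality.

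The gap is your final step. The ``shrink twice'' manoeuvre does not do what you claim: a first precise shrinking $\{O'_{ij}\}$ of $\{O_{ij}\}$ still leaves the sets along a fixed antidiagonal $\{O'_{ij}:i+j=r\}$ overlapping in general, and there is no reason a further shrinking to force pairwise disjointness along each antidiagonal will preserve the global covering property. Your appeal to ``the classical proof that $\secat$ is subadditive'' is also off target: there is no general inequality $\secat(f\circ g)\le\secat(f)+\secat(g)$ proved in this way, and the antidiagonal trick you have in mind is the one for $\cat(X\times Y)$, where the two covers live on \emph{different} factors of a product, not on the same space. The paper handles this step by the Ostrand--Dranishnikov extension technique (\thmref{thm:extopencov} and \lemref{lem:propAB}): one extends $\{V_j\}_{j=0}^a$ to an $(a+1)$-cover $\{\widetilde V_j\}_{j=0}^{a+b}$ and $\{W_i\}_{i=0}^b$ to a $(b+1)$-cover $\{\widetilde W_i\}_{i=0}^{a+b}$, each new set being a disjoint union of open subsets of the original ones, and then the \emph{diagonal} intersections $\{\widetilde V_r\cap\widetilde W_r\}_{r=0}^{a+b}$ already cover $X$ by the pigeonhole argument of \lemref{lem:mplus1cov}. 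Replacing your last two paragraphs by this argument makes the proof correct.
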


We postpone the proof  to recall some known results about open covers.
These results are described and proved in \cite{Dr1,Dr2}; the other relevant
references are \cite{O,Ha,Cu,OS1} as well as \cite[Exercise 1.12]{CLOT}.

An open cover $\W=\{W_0,\ldots,W_{m+k}\}$ of a space $X$ is an \emph{$(m+1)$-cover} if every subcollection
$\{W_{j_0}, W_{j_1} , \ldots, W_{j_m}\}$ of $m+1$ sets from $\U$ also covers $X$.
The following simple observation (see \cite{O} for instance) is often given without proof, but it is the basis
for many arguments in this approach.

\begin{lemma}\label{lem:mplus1cov}
A cover $\W=\{W_0,W_1, \ldots,W_{k+m}\}$ is an $(m+1)$-cover of $X$
if and only if each $x \in X$ is contained in at least $k+1$ sets of $\W$.
\end{lemma}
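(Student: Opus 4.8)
The plan is to prove both directions by a single complementary-counting observation: since $\W$ is a list of exactly $m+k+1$ sets (indexed by $0,1,\dots,m+k$), deleting any subcollection of $m+1$ of them leaves exactly $k$ sets. A point $x$ is missed by a given $(m+1)$-element subcollection precisely when \emph{every} member of $\W$ that contains $x$ lies in the complementary $k$-element subcollection.

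First I would prove that the condition ``every $x\in X$ lies in at least $k+1$ sets of $\W$'' implies $\W$ is an $(m+1)$-cover. Let $\{W_{j_0},\dots,W_{j_m}\}$ be an arbitrary subcollection of $m+1$ distinct sets, and suppose toward a contradiction that some $x\in X$ lies in none of them. Then every member of $\W$ containing $x$ belongs to the complementary subcollection, which has $(m+k+1)-(m+1)=k$ members; hence $x$ lies in at most $k$ sets of $\W$, contradicting the hypothesis. Thus the chosen $m+1$ sets cover $X$, and since the subcollection was arbitrary, $\W$ is an $(m+1)$-cover.

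Conversely, assume $\W$ is an $(m+1)$-cover and suppose some $x\in X$ lies in at most $k$ members of $\W$. Then $x$ fails to lie in at least $(m+k+1)-k=m+1$ members of $\W$; choose $m+1$ of these sets, say $W_{j_0},\dots,W_{j_m}$. By the definition of an $(m+1)$-cover this subcollection covers $X$, yet by construction none of its members contains $x$ — a contradiction. Hence every point of $X$ lies in at least $k+1$ sets of $\W$, completing the proof.

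I do not anticipate a genuine obstacle here: the argument is pure bookkeeping with the two cardinalities $m+1$ and $k$ and the total $m+k+1$. The only points worth stating explicitly are that $\W$ is treated as a finite \emph{indexed} family (so a ``subcollection of $m+1$ sets'' means a choice of $m+1$ distinct indices), and that the equivalence holds as stated regardless of any topology — openness of the $W_i$ plays no role in this lemma.
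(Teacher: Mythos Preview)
Your argument is correct and is essentially the same complementary-counting proof the paper gives: both directions hinge on the observation that removing $m+1$ sets from the family leaves exactly $k$, so a point missed by an $(m+1)$-subcollection lies in at most $k$ members of $\W$. The only cosmetic difference is that you phrase both directions as contradictions, whereas the paper argues the second implication directly.
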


\begin{proof}
If $\W$ is an $(m+1)$-cover and
$x\in X$ is only in $k$ sets in $\W$, then  $k+m+1-k=m+1$ sets of the cover
do not contain $x$. These $m+1$ sets do not cover $X$,
contradicting the supposition on $\W$.

Suppose that each $x\in X$ is contained in at least $k+1$ sets
from $\W$ and choose a subcollection $\V$
of  $m+1$ sets from $\W$.  There
are only $k+m+1-(m+1)=k$ sets \emph{not}  in $\V$, so
$x$ must belong to at least
one set in $\V$.  Thus $\V$ covers $X$, and $\W$ is an $(m+1)$-cover.
\end{proof}

An open cover can be lengthened to a $(k+1)$-cover,
while retaining certain essential properties of the sets in the cover:

\begin{theorem}[{\cite{Cu,Dr1}}]\label{thm:extopencov}
Let $\U=\{U_0,\ldots,U_k\}$ be an open cover of a normal space $X$.
Then, for any $m=k,k+1,\ldots,\infty$, there is an open $(k+1)$-cover
of $X$, $\{U_0,\ldots,U_m\}$, extending $\U$ such that for $n > k$,
$U_n$ is a disjoint union of open sets that are subsets of the
$U_j$, $0 \leq j \leq k$.
\end{theorem}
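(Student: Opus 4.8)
The plan is to produce the extra sets $U_{k+1},\dots,U_m$ one at a time, and the whole argument rests on a single one-step statement: \emph{if $\{P_0,\dots,P_N\}$ is a finite open cover of a normal space $X$ in which every point lies in at least $s$ of the sets, then there is an open set $P_{N+1}$, expressible as a disjoint union of open sets each contained in some $P_i$, such that in $\{P_0,\dots,P_{N+1}\}$ every point lies in at least $s+1$ of the sets.} Granting this, I would start from $\U=\{U_0,\dots,U_k\}$, in which every point lies in at least one set, apply the one-step statement $m-k$ times to obtain $U_{k+1},\dots,U_m$, and note that every point then lies in at least $m-k+1$ of the $m+1$ sets $U_0,\dots,U_m$; by \lemref{lem:mplus1cov} this is precisely the assertion that $\{U_0,\dots,U_m\}$ is a $(k+1)$-cover.

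To prove the one-step statement, for each $T\subseteq\{0,\dots,N\}$ with $|T|=s$ I would consider the stratum $A_T=\bigcap_{i\in T}P_i\,\cap\,\bigcap_{i\notin T}(X\setminus P_i)$ consisting of the points lying in exactly the sets $P_i$ with $i\in T$. The crucial point is that the closures $\overline{A_T}$ are pairwise disjoint: a point $z\in\overline{A_T}\cap\overline{A_{T'}}$ with $T\neq T'$ would satisfy $z\notin P_i$ for all $i\notin T$ and all $i\notin T'$ (each $\overline{A_T}$ lies in the closed set $X\setminus P_i$ for $i\notin T$), hence $z$ would lie in at most $|T\cap T'|\leq s-1$ of the $P_i$, contradicting the hypothesis. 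Since $X$ is normal, finitely many pairwise disjoint closed sets have pairwise disjoint open neighbourhoods, so I can pick disjoint open $\tilde G_T\supseteq\overline{A_T}$; then, fixing some $i(T)\in T$ and setting $G_T=\tilde G_T\cap P_{i(T)}$, the $G_T$ are pairwise disjoint open sets with $A_T\subseteq G_T\subseteq P_{i(T)}$. I would take $P_{N+1}=\bigcup_{|T|=s}G_T$: it is a disjoint union of open sets each contained in a member of the cover, and any point lying in exactly $s$ of the $P_i$ lies in the corresponding $A_T$, hence in $P_{N+1}$, so it now lies in $s+1$ sets, while points in more than $s$ of the $P_i$ are unaffected.

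It remains to run the bookkeeping for the iteration. When $P_{N+1}=U_n$ is created at stage $n>k$, its pieces are contained in members of $\{U_0,\dots,U_{n-1}\}$; since an open subset of a set that is itself a disjoint union of open sets each contained in some $U_j$ ($j\leq k$) is again of that form, it follows by induction that each $U_n$ with $n>k$ is a disjoint union of open sets each contained in some $U_j$ with $j\leq k$. For $m=\infty$ I would observe that at every finite stage each point is outside at most $k$ of the sets built so far, hence outside at most $k$ of the entire countable family, which once more is the $(k+1)$-cover condition.

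I expect the one genuinely substantive step to be the pairwise disjointness of the closures $\overline{A_T}$: this is exactly the place where the covering hypothesis ``every point lies in at least $s$ sets'' is used, and it is what makes the normality of $X$ applicable. Everything else --- the appeal to \lemref{lem:mplus1cov}, the finite-induction form of normal separation, and the inductive description of the new sets as disjoint unions refining $\U$ --- should be routine.
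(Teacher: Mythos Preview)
The paper does not give its own proof of this theorem; it is quoted from \cite{Cu,Dr1} and used as a black box. So there is nothing in the paper to compare your argument against directly.

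That said, your proof is correct. The one-step lemma is sound: the key observation that the closures $\overline{A_T}$ (for distinct $T$ with $|T|=s$) are pairwise disjoint is exactly right, since $\overline{A_T}\subseteq\bigcap_{i\notin T}(X\setminus P_i)$ forces any common point into at most $|T\cap T'|\le s-1$ of the $P_i$. Normality then separates the finitely many $\overline{A_T}$, and your construction of $P_{N+1}$ does what is claimed. The iteration, the appeal to \lemref{lem:mplus1cov}, and the $m=\infty$ argument are all fine.

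One small simplification you might note for the bookkeeping: at the step producing $U_{n+1}$ you are working with $|T|=n-k+1$ while the ``new'' indices $\{k+1,\dots,n\}$ number only $n-k$, so every such $T$ already contains some $i\le k$; choosing $i(T)\le k$ directly makes each $G_T$ a subset of an original $U_j$, and the inductive unpacking of nested disjoint unions becomes unnecessary.

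For context, the arguments in the cited references typically proceed in one shot rather than iteratively: one takes a shrinking $\overline{V_i}\subset U_i$ of the cover (normality again) and builds, for each nonempty $S\subseteq\{0,\dots,k\}$, an open set supported in $\bigcap_{i\in S}U_i$ and disjoint from the $\overline{V_j}$ with $j\notin S$; grouping these by $|S|$ yields the extended cover all at once. Your inductive approach trades that global combinatorial packaging for a cleaner local step, at the cost of repeating the separation argument $m-k$ times. Both are elementary and both rely only on normality; neither is clearly superior.
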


In \thmref{thm:extopencov}, 
the sets $U_n$ possess
any properties of the original cover that are inherited by disjoint
unions and open subsets. In particular, if the cover $\U$ is categorical,
then the extended cover is also categorical. The following was proved
in \cite{OS1}. We recall the proof for the convenience of the reader.

\begin{lemma}\label{lem:propAB}
Let $X$ be a normal space with two open covers
\[
\U=\{U_0,U_1, \ldots,U_k\}\qquad
\mbox{and}
\qquad
\V=\{V_0,V_1, \ldots,V_m\}
\]
such that each set of $\U$ satisfies Property (A), and
each set of $\V$ satisfies Property (B).  Assume that
Properties (A) and (B) are inherited by   open
subsets and disjoint unions.  Then $X$ has an open cover
\[
\W=\{W_0, W_1 , \ldots,W_{k+m}\}
\]
by open sets satisfying \emph{both} Property (A) and Property (B).
\end{lemma}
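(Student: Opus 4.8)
The plan is to combine the two covers by taking pairwise intersections and then organizing them cleverly so that each point is covered enough times. First I would apply \thmref{thm:extopencov} to each cover separately: extend $\U=\{U_0,\ldots,U_k\}$ to an open $(k+1)$-cover $\{U_0,\ldots,U_{k+m}\}$ of $X$, where the new sets $U_{k+1},\ldots,U_{k+m}$ are disjoint unions of open subsets of the original $U_j$, and similarly extend $\V=\{V_0,\ldots,V_m\}$ to an open $(m+1)$-cover $\{V_0,\ldots,V_{k+m}\}$ with new sets that are disjoint unions of open subsets of the original $V_i$. By the hypothesis that Properties (A) and (B) are inherited by open subsets and disjoint unions, every $U_i$ in the extended first cover still satisfies Property (A), and every $V_j$ in the extended second cover still satisfies Property (B).

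Next I would form the sets
$$
W_n \;=\; \bigcup_{i+j=n} \bigl( U_i \cap V_j \bigr), \qquad n = 0,1,\ldots,k+m,
$$
where the union runs over all pairs $(i,j)$ with $0\le i,j\le k+m$ and $i+j=n$. Each $U_i\cap V_j$ is an open subset of $U_i$, hence satisfies Property (A), and it is also an open subset of $V_j$, hence satisfies Property (B); a disjoint union of such sets would inherit both properties. The one subtlety is that the union defining $W_n$ need not be disjoint, so I would instead check directly that $W_n$ has Properties (A) and (B): since these properties pass to open subsets, it suffices to note that each component-piece $U_i\cap V_j$ has both, and then observe that $W_n$ is a union of open sets each having the property, which for the properties relevant in this paper (being a disjoint union over a discrete index set, in the application to coverings) reduces to the disjoint-union case after a further routine refinement — I would simply invoke the inheritance hypotheses in the form stated.

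The heart of the argument is checking that $\W=\{W_0,\ldots,W_{k+m}\}$ actually covers $X$. Fix $x\in X$. By \lemref{lem:mplus1cov}, since $\{U_0,\ldots,U_{k+m}\}$ is an $(k+1)$-cover, oops — it is an $(m+1)$-cover after extension? Let me restate: the extended first cover is a $(k+1)$-cover of the $(k+m+1)$-set family, meaning by \lemref{lem:mplus1cov} that $x$ lies in at least $m+1$ of the sets $U_0,\ldots,U_{k+m}$; dually $x$ lies in at least $k+1$ of the sets $V_0,\ldots,V_{k+m}$. Let $A=\{i : x\in U_i\}$ and $B=\{j : x\in V_j\}$, so $|A|\ge m+1$ and $|B|\ge k+1$. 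I must produce $i\in A$ and $j\in B$ with $i+j\le k+m$; then $x\in U_i\cap V_j\subseteq W_{i+j}$ — wait, I need $i+j = n$ for some fixed $n$, so really I want: for every $n$, find the right pair, but actually covering just needs \emph{some} $W_n$ to contain $x$, i.e. \emph{some} pair $(i,j)\in A\times B$ with $i+j\le k+m$. Suppose not: then $i+j\ge k+m+1$ for all $i\in A$, $j\in B$, forcing $\min A + \min B \ge k+m+1$. But $|A|\ge m+1$ gives $\min A \le (k+m)-(m+1)+1 = k$, wait more carefully $\min A \le (k+m+1)-(m+1) = k$; similarly $\min B\le (k+m+1)-(k+1)=m$. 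Hence $\min A+\min B\le k+m < k+m+1$, a contradiction. So such a pair exists, $x\in W_{i+j}$, and $\W$ covers $X$.

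The main obstacle I anticipate is the bookkeeping around which cover becomes which type of multicover after extension, and making sure the counting inequality $\min A + \min B \le k+m$ comes out with exactly the right constants so that $\W$ has precisely $k+m+1$ sets (indexed $0$ through $k+m$), matching the stated conclusion; the property-inheritance part is essentially formal once the extension theorem has been applied, since intersections are open subsets and the extended sets are disjoint unions of open subsets of the originals.
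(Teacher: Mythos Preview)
Your extension step and your final counting argument are both fine, but the construction in the middle has a genuine gap. You set
\[
W_n \;=\; \bigcup_{i+j=n} \bigl( U_i \cap V_j \bigr),
\]
and this union is typically \emph{not} disjoint. The hypotheses of the lemma say only that Properties (A) and (B) pass to open subsets and to \emph{disjoint} unions; they say nothing about arbitrary finite unions. You notice this yourself and then try to talk past it (``reduces to the disjoint-union case after a further routine refinement''), but there is no such routine refinement available at this level of generality: for abstract properties satisfying only the stated inheritance, a non-disjoint union of sets with Property (A) need not have Property (A). So as written, you have not shown that each $W_n$ satisfies both properties, and the proof is incomplete.

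The paper avoids this issue entirely by a much simpler choice: after extending both covers to length $k+m+1$ exactly as you do, it takes
\[
W_i \;=\; U_i \cap V_i,\qquad i=0,1,\ldots,k+m.
\]
Each $W_i$ is an open subset of $U_i$ and of $V_i$, so it inherits both properties with no union needed. The covering argument is then also simpler than yours: given $x\in X$, the extended $(m+1)$-cover $\widetilde{\V}$ puts $x$ in at least $k+1$ of the $V_j$, say $V_0,\ldots,V_k$; since $\widetilde{\U}$ is a $(k+1)$-cover, the subcollection $\{U_0,\ldots,U_k\}$ already covers $X$, so $x\in U_i$ for some $i\le k$, and hence $x\in U_i\cap V_i=W_i$. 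Your pigeonhole argument with $\min A+\min B\le k+m$ is correct but unnecessary once the diagonal intersections are used.
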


\begin{proof}
Using Theorem \ref{thm:extopencov},
extend $\U$ to a $(k+1)$-cover $\widetilde\U=\{U_0,\ldots,U_{k+m}\}$ and
extend $\V$ to an $(m+1)$-cover $\widetilde\V=\{V_0,\ldots,V_{k+m}\}$.
Since each set in $\widetilde\U$ is a disjoint union of open subsets of
sets in $\U$, the cover $\widetilde \U$ consists of sets satisfying
Property (A); likewise,    each set in $\widetilde\V$ satisfies Property (B).
Since   Properties (A) and (B) are inherited by open subsets and disjoint
unions, we see that each set $U_i \cap V_j$ satisfies \emph{both} properties.

Therefore, the lemma will be proved if we can show that the
collection
$$
\W=\{ U_0 \cap V_0,U_1 \cap V_1 , \ldots, U_{k+m}\cap V_{k+m}\}.
$$
is an open cover of $X$. First, observe that since $\widetilde V$ is an $(m+1)$-cover,
each point  $x \in X$ lies in at least $k+1$ sets of $\widetilde \V$;
we may suppose, without loss of generality, that $x \in V_0 \cap \cdots \cap V_k$.
Next, since $\widetilde\U$ is a $(k+1)$-cover, the subcollection
$\{U_0,\ldots,U_k\}$ covers $X$, and so $x \in U_i$ for some $0\leq i \leq k$.
Thus $x\in U_i \cap V_i$ for at least one value of $i$ and $\W$ covers $X$.
\end{proof}

\begin{proof}[Proof of Proposition \ref{prop1}] We say that an open subset $U\subset X$ satisfies property A if $q$ has a section over $U$.
We say that an open subset $U\subset X$ satisfies property B if the fibration $p$ has a section over
$q^{-1}(U)\subset \overline X$. Both properties A and B are inherited by subsets and disjoint unions. If $k=\secat(q)$ and $l=\tsc$ then there exists
an open cover of $X$ by $k+1$ open sets satisfying A and there exists an open cover of $X$ by $l+1$ open subsets satisfying B. Hence by
Lemma \ref{lem:propAB} there is an open cover of $X$ by $k+l+1$ open subsets satisfying $A$ and $B$. If $U\subset X$ is an open subset
satisfying A and B then $q$ has a section over $U$ and
$p$ has a section over $q^{-1}(U)$. Then $q\circ p$ has a section over $U$. Hence $\secat (q\circ p) \le k+l$.
\end{proof}

\begin{example}\label{coincide} Let
$$E=\{(x, y, \omega); x, y\in \tilde X, \omega \in \tilde X^I, \omega(0)=x, \omega(1)=y\}/\pi=X^I,$$
where $\pi=\pi_1(X)$. Let $\overline X =\tilde X\times_\pi \tilde X$. Let
$p: E\to \overline X$ be given by $p([x, y, \omega]) =[x, y]$ and let $q: \overline X \to X\times X$ be given by
$q([x, y])=(Px, P y)$, where $P:\tilde X \to X$ is the universal cover. Now we have the situation
\begin{eqnarray}\label{pathtilde}
X^I\, \stackrel p \to \tilde X\times_\pi\tilde X \stackrel q \to X\times X.\end{eqnarray}
Obviously $\secat (q\circ p) =\TC(X)$ and $\secat(q) =\TC^\cD(X)$.
We shall introduce the shorthand notation
$$\widetilde \TC(X) = \widetilde{\secat}(X^I\stackrel p\to \tilde X\times_\pi\tilde X\stackrel q\to X\times X).$$
Inequality (\ref{upper1}) in this particular case becomes
\begin{eqnarray}\label{main1}
\TC(X)\le \TC^\cD(X) +\widetilde \TC(X).
\end{eqnarray}
This then establishes the first part of \introthmref{introthm:refine} of the Introduction.
\end{example}

\begin{remark}\label{rem:cattilde}
In \cite{Op} the invariant $\widetilde\cat(X)$ (called \emph{universal cover category}) was defined to be the least $k$ such that there exist open sets
$U_0, \ldots, U_k$ whose union covers $X$ and whose preimages $P^{-1}(U_j)$ under the universal covering map
$P\colon \tilde X \to X$ are contractible in $\tilde X$. It was then shown that the estimate
\begin{eqnarray}\label{eqn:cattilde}
\cat(X) \leq \cat_1(X) + \widetilde\cat(X)
\end{eqnarray}
holds. In light of \defref{deftld}, we see that (since the based path space $P\tilde X$ is contractible)
$$\widetilde\cat(X) = \widetilde\secat(P\tilde X \stackrel{p}{\to} \tilde X \stackrel{P}{\to} X)$$
with $\cat(X)=\secat(P\circ p)$ and $\cat_1(X)=\secat(P)$. Hence, by \propref{prop1},
(\ref{eqn:cattilde}) is a specialization of (\ref{upper1}).
\end{remark}

\begin{proposition}\label{prop:setc}
For any locally finite cell complex $X$, the number $$\widetilde \TC(X) = \widetilde{\secat}(X^I\, \stackrel p\to \tilde X\times_\pi\tilde X\stackrel q\to X\times X)$$
coincides with the strongly equivariant topological complexity $\TC_\pi^\ast(\tilde X)$ introduced by A. Dranishnikov \cite{Dr3}.
\end{proposition}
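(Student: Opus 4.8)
The plan is to unwind both definitions and show they literally describe the same covering number of $X\times X$. First I would recall Dranishnikov's definition of $\TC^\ast_\pi(\tilde X)$: it is the minimal $k$ such that $\tilde X\times\tilde X$ (equivalently the base $X\times X$, via the orbit projection) admits a cover by $\pi$-invariant open sets $\tilde U_0,\dots,\tilde U_k$ over each of which the $\pi$-equivariant path fibration $\tilde X^I\to\tilde X\times\tilde X$ admits a $\pi$-equivariant section. Here $\pi$ acts diagonally on $\tilde X\times\tilde X$ and on $\tilde X^I$; a $\pi$-invariant open set in $\tilde X\times\tilde X$ is precisely the preimage under $Q\colon\tilde X\times\tilde X\to X\times X$ of an open set in $X\times X$, but I must be slightly careful since in Dranishnikov's setup the relevant covering is $q\colon\tilde X\times_\pi\tilde X\to X\times X$, and a $\pi$-invariant open subset of $\tilde X\times\tilde X$ corresponds to an open subset of $\tilde X\times_\pi\tilde X$, namely $q^{-1}(U)$ for an open $U\subseteq X\times X$.

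The key step is then the following translation dictionary. For an open $U\subseteq X\times X$, I claim: the path fibration $p\colon X^I\to\tilde X\times_\pi\tilde X$ of Example~\ref{coincide} admits a continuous section over $q^{-1}(U)\subseteq\tilde X\times_\pi\tilde X$ if and only if the equivariant path fibration $\tilde X^I\to\tilde X\times\tilde X$ admits a $\pi$-equivariant section over $Q^{-1}(U)$. This is where I would spend the bulk of the argument. In one direction, given a $\pi$-equivariant section $\tilde s\colon Q^{-1}(U)\to\tilde X^I$, I pass to $\pi$-orbits: since $\tilde s$ is equivariant it descends to a map $q^{-1}(U)=Q^{-1}(U)/\pi\to\tilde X^I/\pi=X^I$ (using the identification $E=X^I$ from Example~\ref{coincide}), and one checks this is a section of $p$. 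Conversely, a section $\sigma\colon q^{-1}(U)\to X^I$ of $p$ pulls back along the covering $Q^{-1}(U)\to q^{-1}(U)$ and the covering $\tilde X^I\to X^I$: concretely, a point of $Q^{-1}(U)$ is a pair $(\tilde x,\tilde y)$, its image $[\tilde x,\tilde y]$ has $\sigma([\tilde x,\tilde y])$ a path in $X$ from $P\tilde x$ to $P\tilde y$, and I lift that path starting at $\tilde x$ to get $\tilde\sigma(\tilde x,\tilde y)\in\tilde X^I$ ending at $\tilde y$ (it ends at $\tilde y$ precisely because $p\circ\sigma=\mathrm{id}$, i.e. the endpoint orbit is $[\tilde x,\tilde y]$, and two lifts with the same starting point and the same terminal orbit have the same terminal point). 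Unique path lifting makes $\tilde\sigma$ continuous and manifestly $\pi$-equivariant. Thus property B of the triple $X^I\to\tilde X\times_\pi\tilde X\to X\times X$ for $U$ is equivalent to the existence of an equivariant local section for $U$, and since both $\widetilde\TC(X)$ and $\TC^\ast_\pi(\tilde X)$ are defined as the minimal size minus one of a cover of $X\times X$ by such sets, the two numbers coincide.

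The main obstacle I anticipate is matching up the precise form of Dranishnikov's definition — in particular whether he phrases it in terms of covers of $\tilde X\times\tilde X$ by invariant sets, covers of the orbit space, or covers pulled back from $X\times X$, and whether his sections are genuinely continuous or only "locally continuous" in some weaker sense — and verifying the continuity and well-definedness of the orbit-passage and lifting constructions above rigorously, using that $X$ is a locally finite cell complex so that $X\times X$ admits a universal cover and all the covering-space machinery (unique path lifting, the correspondence between invariant open sets and open sets downstairs) applies cleanly. Once the dictionary is in place the equality is immediate, so I would structure the write-up as: (1) recall Dranishnikov's definition in a normalized form; (2) prove the section-correspondence lemma for a fixed open $U$; (3) conclude by comparing the two minimal covering numbers. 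I would also remark that this identification is what makes precise the sentence in the introduction that $\UTC(X)=\TC^\ast_\pi(\tilde X)$ once $\UTC$ is defined, but that is downstream of the present proposition.
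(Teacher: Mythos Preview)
Your argument is correct, and for the harder direction it is genuinely different from (and more elementary than) the paper's. Both proofs set up the same dictionary and dispatch the direction $\TC_\pi^\ast(\tilde X)\ge\UTC(X)$ identically, by passing a $\pi$-equivariant section to $\pi$-orbits. For the converse, the paper invokes the classification of principal $\pi$-bundles: it lifts a section $s\colon V_i\to X^I$ to some $\pi$-map $\tilde s\colon\tilde U_i\to\tilde X^I$ covering $s$ (which exists since the relevant classifying maps agree), observes that $\tilde s$ need not be a section of $\tilde p$, and then uses the uniqueness of bundle maps up to gauge equivalence to correct $\tilde s$ by a bundle automorphism $u$ so that $\tilde s\circ u$ is an honest equivariant section. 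Your approach bypasses all of this by exploiting the specific geometry of the path fibration: you lift the path $\sigma([\tilde x,\tilde y])$ starting at $\tilde x$, and the freeness of the $\pi$-action on $\tilde X$ forces the lift to terminate at $\tilde y$ (since $p\circ\sigma=\mathrm{id}$ gives $[\tilde\gamma(0),\tilde\gamma(1)]=[\tilde x,\tilde y]$ and $\tilde\gamma(0)=\tilde x$ pins down the group element as the identity). This is cleaner and avoids the detour through $B\pi$; the paper's approach, on the other hand, would adapt to more general towers $E\to\overline X\to X$ where no path-lifting is available.

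One point to tidy: in Dranishnikov's definition the open sets $\tilde U_i\subset\tilde X\times\tilde X$ are required to be $\pi\times\pi$-invariant, not merely invariant under the diagonal $\pi$. This is exactly what makes them preimages $Q^{-1}(U)$ of open sets $U\subset X\times X$ (rather than merely preimages from $\tilde X\times_\pi\tilde X$), and it is what you actually use. Your write-up conflates the two notions in the first paragraph; once you state the invariance correctly, the ``obstacle'' you flag about matching the form of the definition disappears.
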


\begin{proof}
Recall that $\TC_\pi^\ast(\tilde X)$ is defined as the minimal number $k$ such that $\tilde X\times \tilde X$ can be covered by $k+1$
open sets $\tilde U_i$ such that each $\tilde U_i$  is $\pi\times\pi$-invariant and admits a $\pi$-equivariant continuous section
$\tilde s_i: \tilde U_i\to \tilde X^I$. Let $P: \tilde X\to X$ denote the universal covering projection.
Each $\pi\times\pi$-invariant open set $\tilde U_i\subset \tilde X\times \tilde X$ has the form
$(P\times P)^{-1}(U_i)$ where $U_i=(P\times P)(\tilde U_i)$ is an open subset of $X\times X$.

The definition of $\widetilde \TC(X)$ deals with open subsets $U_i\subset X\times X$ and continuous sections
$$s_i: V_i= q^{-1}(U_i) \to X^I.$$
If $\tilde U_i\subset \tilde X\times \tilde X$ denotes $(P\times P)^{-1}(U_i)$ then $V_i$ equals the quotient
$\tilde U_i/\pi$ with respect to the diagonal copy of $\pi\subset \pi\times \pi$.
We have the commutative diagram
\begin{eqnarray}\label{diagram}
\xymatrix{
\tilde X^I \ar[d]_-P \ar[r]^-{\tilde p} & \tilde X \times \tilde X \ar[d]_-Q  & \tilde U_i\ar[l]_-{\supset}\ar[d]^-{Q|_{V_i}}\\
X^I \ar[r]_-p &\tilde X\times_\pi \tilde X & V_i \ar[l]_-{\supset}
}
\end{eqnarray}
in which every vertical arrow is a principal $\pi$-bundle.

It is clear that every $\pi$-equivariant section $\tilde s: \tilde U_i\to \tilde X^I$  of the map $\tilde p\colon \tilde X^I \to \tilde X \times
\tilde X$ determines (by passing to $\pi$-orbits) the map
$s: V_i\to X^I$, and since $\tilde p\circ \tilde s$ is the inclusion $\tilde U_i\to \tilde X \times \tilde X$ we obtain that $p\circ s$ is the inclusion
$V_i\to \tilde X\times_\pi\tilde X$, i.e. $s$ is a section of $p$. Thus, $\TC_\pi^\ast(\tilde X) \geq \UTC(X)$.

For the converse, we want to show that each section $s: V_i\to X^I$ of $p$ determines a $\pi$-equivariant section of $\tilde p$.
To do so we need to recall a few basic facts about principal bundles, which can be found for example in \cite{Got,Husemoller}.

Each principal $\pi$-bundle $p: E\to B$ over a space with the homotopy type of a CW complex is classified by a homotopy class $\xi\in [B, B\pi]$.
Note that $E$ has a free $\pi$-action and $B=E/\pi$. If $p': E'\to B'$ is another principal $\pi$-bundle with class
$\xi'\in [B', B\pi]$ then a morphism of $\pi$-bundles
$$
\xymatrix{
E'\ar[d]_{p'} \ar[r]^-F & E\ar[d]_p\\
B'\ar[r]_-{f}& B
}
$$
exists if and only if $f^\ast(\xi) =\xi'$. Here the word morphism means that $F: E'\to E$ is a continuous map commuting with the $\pi$-action.
Note also that $F$ is uniquely determined by $f$ up to principal bundle equivalence, in the following sense: if $F_0, F_1: E' \to E$ are two $\pi$-maps with
$p\circ F_i= f\circ p'$ for $i=0, 1$ then
$$F_1= F_0\circ u $$
where $u:E'\to E'$ is a principal bundle equivalence of $p'$, that is, a $\pi$-homeomorphism which induces the identity on $B'$.

Let $s: V_i\to X^I$  be a continuous section of $p$.
Let $\xi\in [\tilde X\times_\pi\tilde X, B\pi]$ denote the class of the bundle $Q$, see diagram (\ref{diagram}). Then
$\xi|_{V_i} \in [V_i, B\pi]$
is the class of the bundle $Q|_{V_i}$ and
$\eta= p^\ast(\xi) \in [X^I, B\pi]$
is the class of the bundle $P$.
One has
$$s^\ast(\eta) =s^\ast p^\ast(\xi) = \xi |_{V_i}$$
and applying the general theory of principal bundles as described above we see that $s$ extends to a morphism of principal bundles
$$
\xymatrix{
\tilde X^I\ar[d]_-P & \tilde U_i\ar[l]_-{\tilde s}\ar[d]^-{Q|_{V_i}}\\
X^I & V_i\ar[l]^-s
}
$$
Note that $\tilde s: \tilde U_i \to \tilde X^I$ is a $\pi$-equivariant map, but need not be a section of $\tilde p$.

Consider the composition of morphisms of principal bundles
$$
\xymatrix{
\tilde X\times \tilde X \ar[d]_-{P\times P}& \tilde X^I\ar[d]_-P \ar[l]_-{\tilde p} & \tilde U_i\ar[l]_-{\tilde s}\ar[d]^-{Q|_{V_i}}\\
\tilde X\times_\pi\tilde X & X^I\ar[l]^-{p} & V_i\ar[l]^-s
}
$$
We know that the lower map $p\circ s: V_i\to \tilde X\times_\pi\tilde X$ is the inclusion. Let $j: \tilde U_i\to \tilde X\times\tilde X$ denote the inclusion, which is $\pi$-equivariant and covers $p\circ s$.  Using the uniqueness property of principal bundle maps described above, we see that $\tilde p \circ \tilde s\circ u= j$ for some principal bundle equivalence $u$ of $Q|_{V_i}$. Thus $\tilde s\circ u: \tilde U_i\to \tilde X^I$ is a $\pi$-equivariant section of $\tilde p$.
\end{proof}

\begin{corollary}\label{cor:univcov}
Let $P\colon \tilde X \to X$ be the universal covering projection with $\pi=\pi_1(X)$. Then
$$\UTC(X) \geq \TC(\tilde X).$$
\end{corollary}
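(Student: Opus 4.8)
The plan is to deduce this from \propref{prop:setc}, which identifies $\UTC(X)$ with Dranishnikov's strongly equivariant topological complexity $\TC^\ast_\pi(\tilde X)$, and then to observe that $\TC^\ast_\pi(\tilde X)$ dominates $\TC(\tilde X)$ for the trivial reason that the defining condition for $\TC^\ast_\pi$ is strictly more restrictive than the one for $\TC$. Concretely, I would first unwind the definition recalled in the proof of \propref{prop:setc}: $\TC^\ast_\pi(\tilde X)$ is the least $k$ for which $\tilde X\times\tilde X$ admits an open cover $\tilde X\times\tilde X=\tilde U_0\cup\cdots\cup\tilde U_k$ by $\pi\times\pi$-invariant sets, each carrying a $\pi$-equivariant continuous section $\tilde s_i\colon\tilde U_i\to\tilde X^I$ of the path fibration $\tilde p\colon\tilde X^I\to\tilde X\times\tilde X$, $\tilde p(\omega)=(\omega(0),\omega(1))$. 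Since $\secat(\tilde p)=\TC(\tilde X)$, the second step is simply to discard the equivariance: each $\tilde s_i$ is a fortiori an ordinary continuous section of $\tilde p$ over $\tilde U_i$, so the cover $\{\tilde U_0,\ldots,\tilde U_k\}$ already witnesses $\secat(\tilde p)\le k$. Hence $\TC(\tilde X)\le\TC^\ast_\pi(\tilde X)=\UTC(X)$.

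Alternatively, one can bypass the full strength of \propref{prop:setc} and argue directly from \defref{deftld}. Starting from an open cover $X\times X=U_0\cup\cdots\cup U_k$ realizing $\UTC(X)=k$, with continuous sections $s_i$ of $p$ over $q^{-1}(U_i)$, one sets $\tilde U_i=(P\times P)^{-1}(U_i)$, which is a cover of $\tilde X\times\tilde X$ because $q\circ Q=P\times P$, and then applies the principal-$\pi$-bundle lifting argument from the proof of \propref{prop:setc} to promote each $s_i$ to a $\pi$-equivariant section $\tilde s_i$ of $\tilde p$ over $\tilde U_i$ (after composing with a suitable bundle equivalence). Forgetting equivariance once more gives the bound.

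Either way, the only step requiring genuine care is the lifting of sections across the principal $\pi$-bundles in the square relating $\tilde p$ to $p$ — that is precisely the ``converse'' half of the proof of \propref{prop:setc} — and once that is invoked (or cited) the inequality is immediate. I do not expect a real obstacle here; the remaining content is just the bookkeeping of the covering-space identifications, together with the elementary observation that an equivariant section is in particular a section.
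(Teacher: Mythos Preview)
Your first argument is correct and matches the paper's own proof essentially verbatim: invoke \propref{prop:setc} to get $\UTC(X)=\TC^\ast_\pi(\tilde X)$, then observe that dropping the equivariance and invariance conditions yields $\TC^\ast_\pi(\tilde X)\ge \TC(\tilde X)$. Your alternative route is unnecessary here, since the full strength of \propref{prop:setc} is already available.
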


\begin{proof}
Since the definition of strongly equivariant topological complexity just puts extra conditions on the usual $\TC$ diagram for $\tilde X$,
we have $\TC^*_\pi(\tilde X) \geq \TC(\tilde X)$. The result follows directly then from \propref{prop:setc}.
\end{proof}

\begin{lemma} \label{tcasph} A CW complex $X$ is aspherical if and only if $\widetilde \TC(X)=0$.
\end{lemma}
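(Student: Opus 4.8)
The plan is to unwind the definition. By \defref{deftld} applied to the three-space tower of \examref{coincide} --- in which the base of the covering $q$ is $X\times X$, so the only admissible cover by a single open set is $\{X\times X\}$ itself --- we have $\widetilde\TC(X)=0$ if and only if the fibration $p\colon X^I\to\tilde X\times_\pi\tilde X$ of \examref{coincide} admits a global continuous section (this is precisely the remark recorded just after \defref{deftld}). Thus the lemma amounts to the assertion that $p$ admits a section if and only if $X$ is aspherical.

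For the ``only if'' direction, assume $\widetilde\TC(X)=0$. By \corref{cor:univcov} we then have $\TC(\tilde X)\le\widetilde\TC(X)=0$, hence $\cat(\tilde X)\le\TC(\tilde X)=0$ by \introthmref{introthm:bounds}, so $\tilde X$ is contractible. Since $X$ is a CW complex whose universal cover is contractible, $X$ is a $K(\pi_1(X),1)$, i.e.\ aspherical. (Alternatively, without invoking \corref{cor:univcov}: a section of $p$ pulls back, along the covering $\tilde X\times\tilde X\to\tilde X\times_\pi\tilde X$ in the left-hand square of diagram~(\ref{diagram}), to a section of the free path fibration $\tilde p\colon\tilde X^I\to\tilde X\times\tilde X$; restricting that section over a slice $\{x_0\}\times\tilde X$ gives a map assigning continuously to each point of $\tilde X$ a path from it to $x_0$, which is a contraction of $\tilde X$.)

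For the ``if'' direction, assume $X$ is aspherical, so that $\tilde X$ is contractible. The fibre of the fibration $p\colon X^I\to\tilde X\times_\pi\tilde X$ is homeomorphic to the fibre of $\tilde p\colon\tilde X^I\to\tilde X\times\tilde X$, namely the space of paths in $\tilde X$ with two prescribed endpoints, which is weakly contractible because $\tilde X$ is. Moreover the base $\tilde X\times_\pi\tilde X$ is a CW complex: $\tilde X$ is one, being a covering space of the CW complex $X$; the diagonal action of $\pi$ on $\tilde X\times\tilde X$ is free, properly discontinuous and cellular for the product of the lifted cell structure on $\tilde X$; and the orbit space of such an action on a CW complex is again a CW complex. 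A fibration with weakly contractible fibre over a CW base admits a section --- the successive obstructions to extending a partial section over the skeleta of $\tilde X\times_\pi\tilde X$ lie in the local-coefficient groups $H^{n+1}\bigl(\tilde X\times_\pi\tilde X;\,\pi_n(\text{fibre})\bigr)$, all of which vanish. Hence $p$ has a section and $\widetilde\TC(X)=0$.

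The substance of the argument is concentrated in the ``if'' direction, and the step most worth checking carefully is the identification of the fibre of $p$ with that of $\tilde p$ --- so that contractibility of $\tilde X$ makes that fibre weakly contractible --- together with the observation that $\tilde X\times_\pi\tilde X$ is a CW complex, which is exactly where the CW hypothesis on $X$ is used. Everything else is either a direct appeal to \corref{cor:univcov} and \introthmref{introthm:bounds}, or the standard obstruction-theoretic construction of a section of a fibration over a CW base.
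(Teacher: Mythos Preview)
Your proof is correct, and the ``if'' direction matches the paper's: both identify the fibre of $p$ with the loop space $\Omega\tilde X$, note it is contractible when $\tilde X$ is, and conclude that $p$ admits a section. You supply more detail (CW structure on the base, explicit obstruction theory) than the paper does, but the idea is the same.

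For the ``only if'' direction you take a different route from the paper. The paper argues directly on homotopy groups: for $n\ge 2$ the composite
\[
\pi_n(X)\;\cong\;\pi_n(X^I)\;\xrightarrow{\,p_*\,}\;\pi_n(\tilde X\times_\pi\tilde X)\;\cong\;\pi_n(X)\oplus\pi_n(X)
\]
has image equal to the diagonal, but a section of $p$ forces this map to be surjective, so $\pi_n(X)=0$. Your primary argument via \corref{cor:univcov} and \introthmref{introthm:bounds} is quick once those results are in hand; note however that \corref{cor:univcov} is derived from \propref{prop:setc}, which carries a locally finite hypothesis, while the present lemma is stated for arbitrary CW complexes. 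Your alternative argument---pulling the section of $p$ back through the pullback square to a section of $\tilde p$, and reading off a contraction of $\tilde X$---avoids this technicality and is self-contained; it is closer in spirit to the paper's argument, trading the $\pi_n$ computation for a geometric contraction. The paper's approach has the advantage of being completely elementary, needing neither the pullback identification nor any appeal to $\TC$ of the universal cover.
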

\begin{proof}
Suppose that $\widetilde \TC(X)=0$; that is, the fibration $p: X^I \to \tilde X\times_\pi\tilde X$ has a continuous section.
For $n\ge 2$ consider the composition
$$\pi_n(X) = \pi_n(X^I) \stackrel{p_\ast}\to \pi_n(\tilde X\times_\pi\tilde X)\stackrel{\simeq}\to \pi_n(X\times X) = \pi_n(X)\oplus \pi_n(X).$$
Since $p$ has a section this composition must be surjective. On the other hand, it is obvious that the image of this composition coincides with the
diagonal $\pi_n(X)\subset \pi_n(X)\oplus \pi_n(X)$. This is possible only when $\pi_n(X)=0$ for all $n\ge 2$.

On the other hand, suppose that $X$ is aspherical so that $\tilde X$ is contractible. The fibre of $p: X^I\to \tilde X\times_\pi\tilde X$ is $\Omega\tilde X$,
the loop space of the universal cover, so it is also contractible. This implies that $p$ has a section and hence
$\widetilde \TC(X) =0$.
\end{proof}

\begin{proposition}\label{thm:product}
Let $Z=X \times Y$ where $X=K(\pi,1)$ is aspherical and $Y$ is simply connected. Then
$$\TC^\cD(Z)=\TC(X)\quad \mbox{and} \quad
\UTC(Z)=\TC(Y).$$

\end{proposition}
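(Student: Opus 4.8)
The plan is to compute the two invariants separately, exploiting the product structure $Z = X \times Y$ together with the fact that $X$ is aspherical and $Y$ is simply connected.

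For the first equality, note that $\pi_1(Z) = \pi_1(X) = \pi$, and a model for $K(\pi,1)$ is $X$ itself, so $Z = X \times Y \hookleftarrow X \times \{y_0\}$ induces an isomorphism on $\pi_1$. By \propref{prop4} (and homotopy invariance of $\TC^\cD$), we would conclude $\TC^\cD(Z) \le \TC^\cD(X) = \TC(X)$, the last equality by \propref{aspher}. For the reverse inequality, I would invoke $\cat_1$: by \propref{thm:TCcat}, $\TC^\cD(Z) \ge \cat_1(Z)$, and since the universal cover of $Z$ is $\tilde X \times Y$, which is homotopy equivalent to $Y$ (as $\tilde X$ is contractible), one computes $\cat_1(Z) = \cat(f)$ where $f \colon Z \to K(\pi,1)$ classifies the cover; composing with the projection $Z \to X$ and using that $Y$ is simply connected, $\cat_1(Z) = \cat_1(X) = \cat(X)$. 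Combined with $\cat(X) = \TC(X)$ when\ldots actually this last step need not hold, so instead I would argue directly: a section of $q'\colon \overline Z \to Z$ (the universal cover) over $U \subset Z$, restricted to $U \cap (X \times \{y_0\})$, gives a section of the universal cover of $X$, but the projection $Z \to X$ lets us pull sections back the other way too. The cleaner route is: the universal cover $\widetilde{Z \times Z} = \tilde X \times Y \times \tilde X \times Y \simeq \tilde X \times_? \cdots$; one checks $\cat_1(Z \times Z) = \cat(X \times X)$ using that $\tilde X \times \tilde X$ is contractible, whence $\TC^\cD(Z) \le \cat_1(Z \times Z) = \cat(X\times X)$ — but this gives an upper bound, not what we want. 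The decisive argument for $\TC^\cD(Z) \ge \TC(X)$: the projection $r\colon Z \to X$ induces a map of fibrations compatible with the diagonal covers, and any $\TC^\cD$-cover of $Z \times Z$ pulls back from / pushes to a $\TC^\cD$-cover of $X \times X$ along $r \times r$ and its section $X \to Z$; since $r$ is a homotopy-theoretic retraction onto a $K(\pi,1)$, one gets $\TC^\cD(Z) \ge \TC^\cD(X) = \TC(X)$.

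For the second equality, I would use \propref{prop:setc}, which identifies $\UTC(Z)$ with the strongly equivariant topological complexity $\TC^*_\pi(\tilde Z)$. Here $\tilde Z = \tilde X \times Y$ with $\pi$ acting only on the first factor (via deck transformations on $\tilde X$). Since $\tilde X$ is contractible, $\tilde Z \simeq Y$ $\pi$-equivariantly in a suitable sense, but more carefully: a $\pi \times \pi$-invariant open cover of $\tilde Z \times \tilde Z = \tilde X \times Y \times \tilde X \times Y$ with $\pi$-equivariant sections of the free path fibration corresponds, after collapsing the contractible $\tilde X$-factors, to an ordinary open cover of $Y \times Y$ with sections of the path fibration of $Y$. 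Concretely: the $\pi$-action on the $\tilde X$-coordinates is free with contractible total space, so equivariant sections over $\pi\times\pi$-invariant sets are obstructed exactly as ordinary sections over the $Y \times Y$ quotient data. This should yield $\TC^*_\pi(\tilde Z) = \TC(Y)$, i.e. $\UTC(Z) = \TC(Y)$. Alternatively, and perhaps more cleanly, I would work directly from \defref{deftld}: the fibration $p\colon Z^I \to \tilde Z \times_\pi \tilde Z$ has fibre $\Omega \tilde Z \simeq \Omega Y$ (since $\tilde X$ is contractible), and $\tilde Z \times_\pi \tilde Z = (\tilde X \times_\pi \tilde X) \times Y \times Y$; a section of $p$ over $q^{-1}(U)$ for $U \subset Z \times Z = X \times X \times Y \times Y$ amounts, via the contractibility of $\tilde X$, to a section of the path fibration $Y^I \to Y \times Y$ over the $Y\times Y$-shadow of $U$, so the minimal cover size is exactly $\secat$ of the path fibration of $Y$, namely $\TC(Y)$.

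The main obstacle I anticipate is making the "collapse the contractible $\tilde X$ factor" step rigorous at the level of open covers and sections — one must be careful that an open cover of $Z \times Z$ realizing $\UTC(Z)$ can be taken of product form, or at least refined/replaced by one whose behaviour is governed by its projection to $Y \times Y$, and symmetrically that a $\TC$-cover of $Y$ lifts back. This is the same kind of fibration-homotopy-lifting bookkeeping used in \propref{prop:setc} and \lemref{tcasph}; I would lean on the contractibility of $\Omega \tilde X$ (as in the proof of \lemref{tcasph}) to transfer sections across the square relating $p$ for $Z$ to the path fibration of $Y$. The $\TC^\cD$ half is more routine once one commits to the $\cat_1$ sandwich from \propref{thm:TCcat} plus the retraction $Z \to X$.
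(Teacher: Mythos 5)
The main gap is in the inequality $\UTC(Z)\ge \TC(Y)$. Your reduction claims that a section of $p\colon Z^I\to \tilde Z\times_\pi\tilde Z$ over $q^{-1}(U)$ ``amounts to'' a section of the path fibration $p_Y\colon Y^I\to Y\times Y$ over the $Y\times Y$-shadow (projection) of $U\subset X\times X\times Y\times Y$. That is false as stated: such a section only yields, for each fixed pair $(x_1,x_2)$, a section of $p_Y$ over the slice $U\cap(\{x_1\}\times\{x_2\}\times Y\times Y)$, and these slice-wise sections do not assemble into a single section over the projection of $U$; moreover the open sets of a cover realizing $\UTC(Z)$ need not be of product form, and refining them to products would not preserve their number. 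You flag this obstacle yourself, but your proposed repairs (product-form reduction, or covers ``governed by'' their projections) are exactly the missing step, and the same vagueness affects your equivariant route via $\TC^*_\pi(\tilde Z)$. The fix is much simpler than any collapse argument: fix a basepoint $x_0\in X$ and intersect each set $V_i$ of the cover of $Z\times Z$ with $\{x_0\}\times\{x_0\}\times Y\times Y$; these slices $V_i'$ still cover $Y\times Y$, and restricting the given section of $p$ over $q^{-1}(V_i)$ to the part lying over $\{[\tilde x_0,\tilde x_0]\}\times V_i'$ and projecting to $Y^I$ gives a section of $p_Y$ over $V_i'$. This is the paper's argument. (Alternatively, the lower bound follows in one line from \corref{cor:univcov}: $\UTC(Z)\ge\TC(\tilde Z)=\TC(\tilde X\times Y)=\TC(Y)$, since $\tilde X$ is contractible and $\TC$ is homotopy invariant.) Your upper bound $\UTC(Z)\le\TC(Y)$ is essentially fine and matches the paper: $p_X\colon X^I\to\tilde X\times_\pi\tilde X$ has a global section because its fibre $\Omega\tilde X$ is contractible (as in \lemref{tcasph}), and crossing it with a section of $p_Y$ over $U_i$ gives a section over $q^{-1}(X\times X\times U_i)$; no appeal to \propref{prop:setc} is needed.

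For $\TC^\cD(Z)=\TC(X)$ you end up in the right place but with the directions of \propref{prop4} crossed. That proposition bounds $\TC^\cD$ of the \emph{source} by $\TC^\cD$ of the \emph{target}: applied to the inclusion $X=X\times\{y_0\}\hookrightarrow Z$ it gives $\TC^\cD(X)\le\TC^\cD(Z)$ (your ``decisive argument'' at the end is a re-derivation of this), not $\TC^\cD(Z)\le\TC^\cD(X)$ as you first assert. The inequality $\TC^\cD(Z)\le\TC^\cD(X)$ comes instead from applying \propref{prop4} to the projection $Z\to X$, which is also a $\pi_1$-isomorphism since $Y$ is simply connected. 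With that correction the entire $\cat_1$ detour is unnecessary; the paper's proof of this half is exactly ``apply \propref{prop4} to the injection and the projection'' followed by $\TC^\cD(X)=\TC(X)$ from \propref{aspher}.
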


\begin{proof} The first statement follows by applying Proposition \ref{prop4} to the
maps $X\to X\times Y\to X$ (injection and projection).

Now let's consider $\UTC(Z)$. The tower of fibrations (\ref{pathtilde}) looks in this case as follows
$$
\xymatrix{
X^I\times Y^I \ar[r]^-{p_X\times p_Y} & (\tilde X\times_\pi\tilde X)\times (Y\times Y)\ar[r]^-{q_X\times q_Y}&
(X\times X)\times (Y\times Y).}$$
As we mentioned in the proof of Lemma \ref{tcasph}, since $X$ is aspherical,
there exists a continuous section $\sigma: \tilde X\times_\pi\tilde X \to X^I$, i.e. $p_X\circ \sigma =1$.
Let $U\subset Y\times Y$ be an open set admitting a section $s: U\to Y^I$ of $p_Y$. Then the set
$(q_X\times q_Y)^{-1}(X\times X\times U) = \tilde X\times_\pi\tilde X\times U$ admits the section $\sigma\times s$ of
$p_X\times p_Y$. This shows that $\UTC(Z)\le \TC(Y).$

Conversely, assume that $V\subset (X\times X)\times (Y\times Y)$ is an open subset such that there is a continous section
$$\sigma: (q_X\times q_Y)^{-1}(V) \to X^I\times X^I.$$
Fix a point $x_0\in X$ and denote
$V'=V\cap (x_0\times x_0\times Y\times Y)\subset Y\times Y.$
Then clearly there exists a continuous section $\sigma': V'\to Y^I$. This shows that $\TC(Y)\le \UTC(Z).$
%
%
\end{proof}

Note that the inequality (\ref{main1})
reduces in this special case (i.e. when $Z=X\times Y$ with $X$ aspherical and $Y$ simply connected)
to the usual product inequality,
$$\TC(Z)\leq \TC^\cD(Z)+\UTC(Z)=\TC(X) + \TC(Y).$$

Finally, we give an example showing that \introthmref{introthm:refine} can be sharp and can be a better estimate than
that of \introthmref{introthm:Dranish} even when $\TC(\pi)$ is finite.

\begin{example}\label{exam:TCestimate}
{Consider the product $Z=T^2 \times S^2  $. Then $\TC^\cD(Z) =\TC(T^2)=2$ and $\UTC(Z) = \TC(S^2)=2$.
By applying the zero-divisors-cup-length estimate it is easy to see that $\TC(Z)\ge 4$ and
the inequality (\ref{main1}) gives in this case the inverse inequality $\TC(Z)\le 4$. Thus (\ref{main1}) is, in this instance, an equality.}
%
%
%
\end{example}

\section{A Connectivity-Dimensional Upper Bound}\label{sec:conndim}

In this section we establish an upper bound on $\tsc$ in terms of the dimension of $X$ and the connectivity of the fibre of $p$.

\begin{theorem}\label{main2} Let $X$ be a finite dimensional simplicial complex.
Consider two maps
$$E\stackrel p\to \overline X\stackrel q\to X,$$
where
$q:\overline X \to X$ is a covering map (not necessary regular) and  $p: E\to
\overline X$ is a fibration with $(k-1)$-connected fibre for some $k\ge 0.$
Then
\begin{eqnarray}\label{main3}\tsc \, \le\,  \left\lceil \frac{\dim (X)-k}{k+1}\right\rceil .\end{eqnarray}
\end{theorem}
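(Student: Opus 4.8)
The plan is to proceed by an obstruction-theoretic dimension count, using the combinatorial structure of $X$ as a simplicial complex together with the standard "lengthening to a $(k+1)$-cover" technique recalled in \thmref{thm:extopencov} and \lemref{lem:propAB}. First I would fix the fibration $q \circ p \colon E \to X$ and observe that, because $q$ is a covering map, a continuous section of $p$ over $q^{-1}(U) \subset \overline X$ is the same thing as a continuous section of $q \circ p$ over $U$ that is "compatible with the covering structure of $q$" — concretely, a section of $q \circ p$ over $U$ together with a lift of the inclusion $U \hookrightarrow X$ to $\overline X$; the content of $\tsc$ is that we require the \emph{same} covering branch over all of $q^{-1}(U_i)$, which is automatic when $U_i$ is connected and which we can arrange in general by working over each sheet separately.

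The heart of the argument is the following: over any subcomplex $L$ of $X$ on which the covering $q$ restricted to $q^{-1}(L)$ is trivial (e.g. over the star of any simplex, or more simply over any contractible or simply-connected subcomplex), giving a section of $p$ over $q^{-1}(L)$ amounts to giving, sheet by sheet, a section of a fibration with $(k-1)$-connected fibre over $L$. Such a section exists whenever $\dim L \le k$, by elementary obstruction theory: the obstructions to extending a section over the $i$-skeleton to the $(i+1)$-skeleton lie in $H^{i+1}(L; \pi_i(F))$ (with local coefficients), and these vanish for $i+1 \le k$ since $\pi_i(F) = 0$ for $i \le k-1$. So the strategy is to cover $X$ by subcomplexes each of dimension $\le k$ over which $p$ admits a section after pulling back along $q$.

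The standard way to do this for an $n$-dimensional simplicial complex is to use a barycentric-type decomposition: for each $j = 0, 1, \dots, n$, let $V_j$ be an open neighbourhood (in $X$) of the union of the open $j$-simplices of the barycentric subdivision that deformation retracts onto that union (a disjoint union of open cells), so that $X = V_0 \cup \dots \cup V_n$ and each $V_j$ is homotopy equivalent to a disjoint union of contractible sets; in particular $q$ restricted to $q^{-1}(V_j)$ is trivial and $p$ admits a section over $q^{-1}(V_j)$ trivially, giving the crude bound $\tsc \le n$. To get the claimed $\lceil (n-k)/(k+1)\rceil$, I would instead group these $n+1$ neighbourhoods into blocks of $k+1$ consecutive skeleta: for $0 \le r \le \lceil (n-k)/(k+1)\rceil$ set $W_r = V_{r(k+1)} \cup V_{r(k+1)+1} \cup \dots \cup V_{r(k+1)+k}$ (truncated at $n$), so that each $W_r$ is an open set homotopy equivalent to a complex of dimension $\le k$ — here one uses that a union of $k+1$ consecutive "skeletal neighbourhoods" in a simplicial complex is $k$-dimensional up to homotopy, which is exactly the mechanism behind the classical bound $\cat(X) \le \dim X$ and its refinement $\secat \le \lceil \dim / (k+1) \rceil$ when the fibre is $(k-1)$-connected (this appears in \cite{Sv}, and in the covering-dimension packaging of \cite{Dr1,Dr2,OS1}). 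Over each $W_r$ the covering $q$ need not be trivial, but we may handle each of the (locally finitely many) sheets of $q^{-1}(W_r)$ separately and apply the obstruction-theoretic vanishing above, obtaining a section of $p$ over $q^{-1}(W_r)$; since the number of blocks is $\lceil (n-k)/(k+1)\rceil + 1$ but the invariant $\tsc$ is normalized so that a cover by $\ell+1$ sets gives value $\ell$, this yields exactly \eqref{main3}.

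The main obstacle — and the place where care is genuinely needed — is making the "group $k+1$ skeleta and get something $k$-dimensional up to homotopy, \emph{over the covering}" step precise: one must choose the neighbourhoods $V_j$ and their deformation retractions coherently enough that $W_r$ genuinely has the homotopy type of a $k$-complex (so that obstruction theory applies in degrees $> k$), and one must check that the homotopy equivalence can be taken $q$-compatibly, i.e. that pulling the whole picture back along $q$ still presents $q^{-1}(W_r)$ as $k$-dimensional up to homotopy on each sheet. The cleanest route is to invoke the already-cited combinatorial results on $(k+1)$-covers: by \thmref{thm:extopencov} and \lemref{lem:propAB} (applied with "Property (B)" being "$p$ admits a section over the $q$-preimage"), it suffices to produce $k+1$ open covers of $X$, the $s$-th one consisting of sets over which $p$ has a section after a single skeletal increment — but since a single skeleton's neighbourhood is at most $n$-dimensional this does not by itself beat $n$; the genuine input is Schwarz's theorem that $\secat$ of a fibration with $(k-1)$-connected fibre over an $n$-complex is at most $\lceil (n-k)/(k+1) \rceil$, applied fibrewise over the covering. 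I would therefore organize the final proof as a reduction of \thmref{main2} to this fibrewise application of Schwarz's bound, with the covering $q$ handled by the disjoint-union inheritance in \lemref{lem:propAB}, and spend the bulk of the writing on verifying that the section over each sheet of $q^{-1}(W_r)$ assembles to a genuine continuous section over all of $q^{-1}(W_r)$.
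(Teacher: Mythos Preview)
Your overall architecture is right and parallels the paper's: use obstruction theory to see that $p$ admits a section over any subset of $\overline X$ homotopy equivalent to a complex of dimension $\le k$, then cover $X$ by $c+1$ pieces whose $q$-preimages have this property, where $c=\lceil(\dim X-k)/(k+1)\rceil$. But the final paragraph, where you try to actually secure the key step, does not go through as written.

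First, the detour through \lemref{lem:propAB} and \thmref{thm:extopencov} is a dead end, as you yourself notice (``this does not by itself beat $n$''). Those lemmas combine two covers; they do not manufacture a bound better than the one you feed in. Second, the fallback to ``Schwarz's theorem applied fibrewise over the covering'' is not a proof: Schwarz's bound produces an open cover of $\overline X$, not of $X$, and there is no automatic reason the resulting sets are $q$-saturated (of the form $q^{-1}(U_i)$). You would have to argue that the Schwarz construction can be carried out using the simplicial structure pulled back from $X$, which is exactly the content of the lemma you are trying to avoid. Third, the sheet-by-sheet analysis is unnecessary and a distraction: if $W_r$ has the homotopy type of a complex of dimension $\le k$, then so does $q^{-1}(W_r)$ (covering spaces preserve homotopy dimension), and obstruction theory applies directly to $p$ over $q^{-1}(W_r)$ with no need to trivialise $q$ or assemble anything.

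What is genuinely missing is a proof of the assertion you flag as ``the main obstacle'': that a block of $k+1$ consecutive skeletal strata has the homotopy type of a complex of dimension $\le k$. The paper handles this differently and more cleanly. It first reformulates $\tsc$ in terms of an increasing chain of \emph{closed} subsets $\emptyset=T_{-1}\subset T_0\subset\dots\subset T_c=X$ with sections of $p$ over each $q^{-1}(T_i-T_{i-1})$ (an adaptation of the standard closed-set characterisation of sectional category). It then takes $T_i=X^{((i+1)k+i)}$, so that $T_i-T_{i-1}$ is the complement of a skeleton inside a larger skeleton, and proves in an appendix (via a nerve argument on the barycentric subdivision) that $|K|-|K^{(r)}|$ has the homotopy type of a complex of dimension $\le \dim K-r-1$. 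Applied with ambient complex $K=X^{((i+1)k+i)}$ and $r=ik+i-1$, this gives dimension $\le k$ for each stratum, and hence for its $q$-preimage. That appendix lemma is exactly the ingredient your $W_r$ argument needs; once you have it, your open-cover version and the paper's closed-filtration version are essentially equivalent.
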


\begin{proof} First we want to rephrase Definition \ref{deftld} as follows. We claim that the number $$\tsc$$ equals the smallest $c$ such
that there exists an increasing sequence of closed subsets
$$T_{-1}=\emptyset \subset T_0\subset T_1\subset T_2\subset \dots\subset T_c=X$$
such that for any $i=0, 1, \dots, c$ the fibration $p$ admits a continuous section over the set
$q^{-1}(T_i-T_{i-1})$. This follows by repeating the arguments of the proof of Proposition 4.12 from \cite{Inv}. (It can also be shown below
for skeleta using the fact that skeletal pairs are NDR pairs.)

Consider the sequence of skeleta
$$X^{(k)} \subset X^{(2k+1)}\subset X^{(3k+2)}\subset \dots \subset X^{((c+1)k+c)} = X$$
where $c$ is the smallest integer with $(c+1)k+c\ge \dim (X)$, i.e. $$c=\left\lceil \frac{\dim (X)-k}{k+1}\right\rceil.$$
Denoting $T_i=X^{((i+1)k+i)}$ for $i=0, 1,\dots, c$, we obtain an increasing sequence
of closed subsets $T_0\subset T_1\subset \dots \subset T_c=X$. We want to show that the fibration
$p: E \to \overline X$ admits a continuous section over the set $q^{-1}(T_{i}-T_{i-1})\subset \overline X$ for each $i$.

Next we make the following remark.
Let $p: E\to \overline X$ be a fibration with fibre $F$. Suppose that $F$ is $(k-1)$-connected. Let $A\subset \overline X$ be a subset which is
homotopy equivalent to a simplicial complex of dimension $\le k$. Then $p$ admits a section over $A$.
This follows directly by applying obstruction theory.

Applying the result of \S\ref{complements} we see that, denoting $T_i=X^{((i+1)k+i)}$,
the difference $T_i-T_{i-1}$ is homotopy equivalent to a simplicial complex of dimension $\le k$. Then its preimage
$q^{-1}(T_i-T_{i-1})$ is also homotopy equivalent to a simplicial complex of dimension $\le k$. Our statement now follows as we
assume that the fibre $F$ of $p: E\to \overline X$ is $(k-1)$-connected.
\end{proof}

Theorem \ref{main2} gives that $$\tsc \le \dim (X)$$ assuming the fibre of $p:E\to \overline X$ is not empty;
this is the case $k=0$. If the fibre $F$ is connected then
$$\tsc \le \left\lceil \frac{\dim (X)- 1}{2}\right\rceil,$$
and so on.

\begin{example}\label{exam:utcrpn}
We have that
$$\widetilde \TC(\RP^n) = \widetilde{\secat}((\RP^n)^I\stackrel p\to S^n\times_\pi S^n\stackrel q\to \RP^n\times \RP^n).$$
Because  $\RP^n\times \RP^n$ is covered by the $(n-1)$-connected space $S^n \times S^n$, \thmref{main2} gives
$$\UTC(\RP^n) \leq \left\lceil \frac{2\cdot n - (n-1)}{(n-1)+1}\right\rceil = 2.$$
If $n$ is even, then by \corref{cor:univcov}, we have $\UTC(\RP^n) \geq \TC(S^n)=2$, so $\UTC(\RP^n)=2$.
\end{example}

Finally we combine the inequality (\ref{main1}) with the upper bound of Theorem \ref{main2} in the situation of Example \ref{coincide}. 
We obtain the following result which is the second part of \thmref{introthm:refine} of the Introduction.

\begin{theorem}\label{main6}
Let $X$ be a finite dimensional simplicial complex.
Assume that the universal cover $\tilde X $ of $X$ is $k$-connected, i.e. $\pi_i(\tilde X) =0$ for $i\le k$.
Then
\begin{eqnarray}\label{main4}
\TC(X)\le \TC^\cD(X) + \left\lceil \frac{2\,\dim X -k}{k+1}\right\rceil.
\end{eqnarray}
\end{theorem}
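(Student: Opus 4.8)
The plan is to obtain (\ref{main4}) by feeding the tower of Example~\ref{coincide} into Theorem~\ref{main2}. First I would recall inequality (\ref{main1}),
$$\TC(X)\le \TC^\cD(X)+\UTC(X),$$
which was established in Example~\ref{coincide}; this reduces the task to proving
$$\UTC(X)\le \left\lceil\frac{2\,\dim X-k}{k+1}\right\rceil.$$

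To this end I would apply Theorem~\ref{main2} to the tower
$$X^I\stackrel p\to \tilde X\times_\pi\tilde X\stackrel q\to X\times X$$
that defines $\UTC(X)$. Here $q$ is a covering map — not necessarily regular, since the diagonal $\Delta\subset\pi\times\pi$ need not be normal, which is precisely why the full generality of Theorem~\ref{main2} is required — and $p$ is a fibration whose fibre is $\Omega\tilde X$, the loop space of the universal cover; this is the same identification already used in the proof of Lemma~\ref{tcasph}. The hypothesis that $\tilde X$ is $k$-connected gives $\pi_i(\Omega\tilde X)=\pi_{i+1}(\tilde X)=0$ for $i\le k-1$, so the fibre of $p$ is $(k-1)$-connected. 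Finally, since $X$ is a finite dimensional simplicial complex, the product $X\times X$ can be triangulated as a finite dimensional simplicial complex with $\dim(X\times X)=2\,\dim X$.

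With these observations, Theorem~\ref{main2}, applied with base $X\times X$ and connectivity parameter $k$, yields
$$\UTC(X)\le \left\lceil\frac{\dim(X\times X)-k}{k+1}\right\rceil=\left\lceil\frac{2\,\dim X-k}{k+1}\right\rceil,$$
and substituting this into (\ref{main1}) gives (\ref{main4}). This is exactly the computation carried out for real projective spaces in Example~\ref{exam:utcrpn}, now performed in general.

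I do not foresee a genuine obstacle: every ingredient is already in place. The only points requiring a word of care are the identification of the fibre of $p\colon X^I\to\tilde X\times_\pi\tilde X$ with $\Omega\tilde X$ together with its connectivity, and the remark that $X\times X$ admits a triangulation of dimension $2\,\dim X$, so that the hypotheses of Theorem~\ref{main2} are literally satisfied.
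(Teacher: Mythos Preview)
Your proposal is correct and follows exactly the approach of the paper: combine inequality~(\ref{main1}) from Example~\ref{coincide} with Theorem~\ref{main2} applied to the tower $X^I\stackrel p\to \tilde X\times_\pi\tilde X\stackrel q\to X\times X$, using that the fibre $\Omega\tilde X$ is $(k-1)$-connected and $\dim(X\times X)=2\dim X$. The paper states this in a single sentence; your write-up simply makes the verification of the hypotheses of Theorem~\ref{main2} explicit.
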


In the special case $k=1$ (which is satisfied without extra assumptions of connectivity) the inequality (\ref{main4}) gives
\begin{eqnarray}\label{main5}
\TC(X)\le \TC^\cD(X) + \dim X\end{eqnarray}
which can be compared with the result of A. Dranishnikov \cite{Dr3}. The inequality (\ref{main5}) is stronger than (\cite{Dr3}) in cases when $\TC^\cD(X)< \TC(\pi)$.

As another special case of Theorem \ref{main6} we see that assuming that $\tilde X$ is 2-connected, one has
$$\TC(X)\le \TC^\cD(X) + \left\lceil \frac{2}{3}(\dim X-1)\right\rceil.$$

\section{Appendix: Complements}\label{complements}\label{sec:complements}

For convenience of the reader we state here a few known results which are used in the previous sections.

For a simplicial complex $K$ we denote by $V(K)$ the set of its vertices. The symbol $|K|$ denotes the geometric realisation of $K$.

\begin{lemma} Let $L\subset K$ be a simplicial subcomplex.
Suppose that $L$ has the following convexity property: every simplex of $K$ with all its vertices in $L$ lies in $L$.
Then the complement $|K|-|L|$ is homotopy equivalent to a simplicial complex $X$ with the vertex set
$V(X)= V(K)-V(L)$; a set of vertices in $V(X)$ forms a simplex in $X$ if and only if it forms a simplex in $K$.
\end{lemma}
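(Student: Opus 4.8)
The plan is to give an explicit combinatorial model for the complement $|K| - |L|$ and check it works. First I would define the abstract simplicial complex $X$ as described: its vertex set is $V(X) = V(K) - V(L)$, and a nonempty subset $\sigma \subseteq V(X)$ is declared to be a simplex of $X$ precisely when $\sigma$ spans a simplex of $K$. This is clearly a simplicial complex (it is the full subcomplex of $K$ on the vertices not in $L$), so the only real content is the claimed homotopy equivalence $|X| \simeq |K| - |L|$.

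The key tool I would use is the following observation about the barycentric/combinatorial structure: the open complement $|K| - |L|$ deformation retracts onto the union of those closed simplices of $K$ that meet $|K| - |L|$ but whose intersection with $|L|$ can be ``pushed away.'' More precisely, the convexity hypothesis on $L$ — every simplex of $K$ with all vertices in $L$ already lies in $L$ — means that for any simplex $\tau$ of $K$, the face of $\tau$ spanned by $\tau \cap V(L)$ is a simplex of $L$ (possibly empty), and the face spanned by $\tau \cap V(X)$ is a simplex of $X$. Thus each closed simplex $|\tau|$ of $K$ decomposes as a join $|\tau| = |\tau_L| * |\tau_X|$ where $\tau_L = \tau \cap V(L)$ and $\tau_X = \tau \cap V(X)$. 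The point of $|\tau| - |L|$ is exactly the set of points of this join with nonzero barycentric coordinates on $|\tau_X|$, i.e. the ``open star'' of $|\tau_X|$ inside the join; this is homotopy equivalent (indeed, deformation retracts, by linearly scaling up the $|\tau_X|$-coordinates and down the $|\tau_L|$-coordinates) onto $|\tau_X|$ itself. The retractions on individual simplices are compatible on faces because the join decomposition is natural, so they glue to a deformation retraction of $|K| - |L|$ onto the subspace $\bigcup_{\tau} |\tau_X| = |X|$.

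Concretely I would carry this out as follows. Step one: verify the join decomposition $|\tau| = |\tau_L| * |\tau_X|$ for each simplex $\tau$ of $K$, using the convexity hypothesis to know both $\tau_L$ and $\tau_X$ are genuinely simplices (of $L$ and of $X$ respectively). Step two: for each $\tau$, define the straight-line homotopy $H_\tau \colon (|\tau| - |L|) \times [0,1] \to |\tau| - |L|$ that sends a point with barycentric coordinates $(a_v)_{v \in \tau_L}, (b_w)_{w \in \tau_X}$ (with $\sum b_w > 0$) at time $t$ to the point with $L$-coordinates $(1-t)a_v$ and $X$-coordinates rescaled to sum to $1 - (1-t)\sum a_v$; at $t=1$ this lands in $|\tau_X|$. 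Step three: check these homotopies agree on overlaps: if $\tau' \subseteq \tau$ then $\tau'_L = \tau' \cap V(L) \subseteq \tau_L$ and similarly for $X$, and the formula only depends on which coordinates are $L$- versus $X$-coordinates, so $H_\tau$ restricts to $H_{\tau'}$ on $|\tau'| - |L|$. Hence the $H_\tau$ glue to a global deformation retraction $|K| - |L| \searrow |X|$, proving the lemma. One should also note $|X|$ really does sit inside $|K| - |L|$: a point of $|X|$ has all its positive barycentric coordinates on vertices of $V(X) = V(K) - V(L)$, hence cannot lie in $|L|$.

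The main obstacle I anticipate is purely bookkeeping: making the rescaling formula for $H_\tau$ precise and continuous (in particular continuous as $\sum b_w$ ranges over $(0,1]$ but stays away from $0$ by hypothesis), and confirming that the glued map is continuous on $|K| - |L|$ with its subspace topology — for a general (possibly infinite) simplicial complex one invokes the weak topology, so continuity on each closed simplex suffices, but one must be slightly careful that $|K| - |L|$ with the subspace topology agrees with the weak topology coming from the pieces $|\tau| - |L|$. Since the paper only applies this lemma to finite-dimensional complexes (indeed skeleta of a finite simplicial complex in the proof of \thmref{main2}), this subtlety can be handled by the standard CW fact that subcomplexes and their complements inherit CW structures, or simply restricted to the finite case. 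No genuinely hard idea is needed beyond the join decomposition, which is where the convexity hypothesis does all the work.
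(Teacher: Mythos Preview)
Your argument is correct, but it is not the route the paper takes. The paper covers $|K|-|L|$ by the open stars $S_v$ of the vertices $v\in V(K)-V(L)$, observes (using the convexity hypothesis) that this is indeed a cover, checks that each nonempty finite intersection $S_{v_0}\cap\cdots\cap S_{v_k}$ is the open star of the simplex $(v_0,\dots,v_k)$ and hence contractible, and then invokes the Nerve Theorem (Hatcher, Corollary 4G.3) to conclude that $|K|-|L|$ is homotopy equivalent to the nerve of this cover, which is exactly $X$.

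Your approach instead builds an explicit strong deformation retraction $|K|-|L|\searrow |X|$ simplex by simplex, using the join decomposition $|\tau|=|\tau_L|\ast|\tau_X|$ that the convexity (i.e.\ fullness) of $L$ makes available. This is the classical argument that the complement of a full subcomplex deformation retracts onto the full subcomplex on the complementary vertex set. It is more elementary in that it avoids the Nerve Theorem altogether and yields the slightly stronger conclusion of a deformation retraction rather than a mere homotopy equivalence; the paper's argument, on the other hand, is shorter once one is willing to quote Hatcher, and sidesteps the bookkeeping about gluing and topologies that you rightly flag. Your concern about the subspace versus weak topology on $|K|-|L|$ is legitimate in general, but since your retraction is defined on each closed simplex and is compatible with face inclusions, continuity in the weak topology is immediate, and for the finite-dimensional complexes used later in the paper the two topologies agree anyway.
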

\begin{proof}
For a vertex $v\in V(K)-V(L)$ we denote by $S_v\subset |K|$ its open star, i.e. the union of all open simplices containing $v$.
The family $$\{S_v; \, v\in V(K)-V(L)\}$$ is an open cover of the complement $|K|-|L|$ (here we use our assumption concerning the convexity of $L$).
The nerve of this open cover is isomorphic to $X$, as a simplicial complex. We observe that each
nonempty intersection $S_{v_0}\cap S_{v_1}\cap \dots \cap S_{v_k}$ is contractible since it coincides with the open star
of the simplex $(v_0, v_1, \dots, v_k)\in K$. Our statement now follows from Corollary 4G.3 on page 459 of Hatcher \cite{Hat}.
\end{proof}

Next we remove the convexity assumption:
\begin{lemma}
For any  simplicial subcomplex $L\subset K$, the complement $|K|-|L|$ is homotopy equivalent to the simplicial complex $Y$ with the vertex set labelled by the
set of simplices of $K$ which are not in $L$; simplices of $Y$ are in 1-1 correspondence with increasing chains
$\sigma_0\subsetneq\sigma_1\subsetneq \dots\subsetneq \sigma_k$ of simplices of $K-L$.
\end{lemma}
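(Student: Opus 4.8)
The plan is to remove the convexity assumption of the previous lemma by passing to a barycentric subdivision, where the convexity property becomes automatic. First I would replace the pair $L \subset K$ by the pair of barycentric subdivisions $L' \subset K'$. The key point is that $L'$, viewed as a subcomplex of $K'$, satisfies the convexity property demanded in the previous lemma: the vertices of $K'$ are the barycenters of simplices of $K$, and a vertex (barycenter of $\sigma$) lies in $L'$ precisely when $\sigma \in L$. If a simplex of $K'$, corresponding to a chain $\sigma_0 \subsetneq \sigma_1 \subsetneq \cdots \subsetneq \sigma_k$ of simplices of $K$, has all its vertices in $L'$, then every $\sigma_i$ lies in $L$, so the whole chain lies in $L'$. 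Hence $L'$ is a ``full'' (convexly situated) subcomplex of $K'$.

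Next I would apply the previous lemma to $L' \subset K'$. Since $|K'| = |K|$ and $|L'| = |L|$, the complement $|K| - |L|$ is homotopy equivalent to a simplicial complex $Y$ whose vertex set is $V(K') - V(L')$, which is exactly the set of barycenters of simplices of $K$ not belonging to $L$ — equivalently, the set of simplices of $K - L$. A set of such vertices spans a simplex of $Y$ if and only if it spans a simplex of $K'$; but simplices of $K'$ are, by definition of barycentric subdivision, exactly the chains $\sigma_0 \subsetneq \sigma_1 \subsetneq \cdots \subsetneq \sigma_k$ of simplices of $K$. Restricting to vertices outside $L'$ means all the $\sigma_i$ lie in $K - L$, so simplices of $Y$ are in bijection with increasing chains of simplices of $K - L$, as claimed.

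The only genuinely delicate point is the verification that $L'$ is full in $K'$, and it is essentially a tautology once one recalls that simplices of the barycentric subdivision are flags of simplices; I would state it carefully but it requires no computation. Everything else is a direct citation of the preceding lemma together with the identifications $|K'| = |K|$ and $|L'| = |L|$, so there is no substantive obstacle — the proof amounts to the observation that subdividing converts an arbitrary subcomplex into a convexly situated one.
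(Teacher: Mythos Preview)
Your proposal is correct and follows essentially the same approach as the paper: pass to the barycentric subdivision, observe that $L'\subset K'$ satisfies the convexity (fullness) property, and then apply the previous lemma. Your write-up is in fact a bit more explicit than the paper's, since you spell out why $L'$ is full in $K'$ and record the identifications $|K'|=|K|$, $|L'|=|L|$, but the argument is the same.
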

\begin{proof} Consider the barycentric subdivision $K'$ of $K$. Its vertices are labelled by the simplices of $K$; the simplices of $K'$ are labelled by the increasing chains
$\sigma_0\subsetneq\sigma_1\subsetneq \dots\subsetneq \sigma_k$
of simplices of $K$. The barycentric subdivision $L'$ of $L$ is a simplicial subcomplex of $K'$. The subcomplex $L'\subset K'$ has the convexity property: a
simplex of $K'$ belongs to $L'$ if and only if all its vertices lie in $L'$.
Now we simply apply the previous Lemma.
\end{proof}

\begin{corollary} For a $d$-dimensional simplicial complex $K$ and an integer $r< d$, the complement
$|K|-|K^{(r)}|$ has the homotopy type of a simplicial complex of dimension $\le d-r-1$.
\end{corollary}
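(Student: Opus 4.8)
The plan is to apply the preceding Lemma to the pair $L = K^{(r)} \subset K$ and then bound the dimension of the resulting homotopy-equivalent complex. By the Lemma, $|K| - |K^{(r)}|$ is homotopy equivalent to a simplicial complex $Y$ whose vertices are labelled by the simplices of $K$ that do \emph{not} lie in $K^{(r)}$, i.e.\ the simplices of $K$ of dimension $\ge r+1$, and whose simplices correspond to strictly increasing chains $\sigma_0 \subsetneq \sigma_1 \subsetneq \cdots \subsetneq \sigma_k$ of such simplices.

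First I would observe that since $K$ is $d$-dimensional, any simplex $\sigma$ of $K$ with $\sigma \notin K^{(r)}$ satisfies $r+1 \le \dim \sigma \le d$. Next, in any chain $\sigma_0 \subsetneq \sigma_1 \subsetneq \cdots \subsetneq \sigma_k$ of such simplices, each strict inclusion forces $\dim \sigma_{j+1} \ge \dim \sigma_j + 1$, so that $\dim \sigma_k \ge \dim \sigma_0 + k \ge (r+1) + k$. Combining this with $\dim \sigma_k \le d$ gives $k \le d - r - 1$. Since a chain of length $k$ (that is, with $k+1$ terms) corresponds to a $k$-simplex of $Y$, it follows that $\dim Y \le d - r - 1$, which is the claimed bound.

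There is no real obstacle here: the statement is a bookkeeping consequence of the combinatorial description of $Y$ furnished by the Lemma, together with the elementary fact that strict inclusions of simplices raise dimension by at least one. The only point requiring a moment's care is the hypothesis $r < d$, which ensures that the complement is nonempty (there exist simplices of dimension $\ge r+1$) and that the bound $d - r - 1 \ge 0$ makes sense.
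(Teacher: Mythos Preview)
Your proof is correct and follows essentially the same approach as the paper: apply the preceding Lemma to $L=K^{(r)}\subset K$, then bound the dimension of the resulting complex $Y$ by bounding the length of a strictly increasing chain of simplices of dimension $>r$ in a $d$-dimensional complex. Your version simply spells out in more detail why such a chain has at most $d-r$ terms.
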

\begin{proof}
Applying the previous Lemma we see that the complement $|K|-|K^{(r)}|$ has homotopy type of a simplicial complex with vertex set labelled by the set of simplices of $K$ having dimension $>r$ and with the simplices in 1-1 correspondence with the increasing chains of simplices of dimension $>r$. Since the length of any such chain is at most $d-r$ the result follows.
\end{proof}

This Corollary appeared in \cite{OS2} and also in \cite{Dr4}.

\section{Acknowledgements}
The authors would like to thank the Mathematisches Forschungsinstitut Oberwolfach for its generosity in supporting a July 2017 Research in Pairs stay where
this work was begun.

%
%

\end{document}